\newcommand{\id}{\mathrm{id}}
\newcommand{\U}{\mathcal{U}}
\newcommand{\lincol}{black}
\newcommand{\linth}{thick}
\newcommand{\po}[2][\pocol]{\filldraw[#1](#2) circle (2 pt);}
\newcommand{\li}[1]{\draw[\linth,\lincol] #1;}
\begin{document}
\mainmatter              		
\pagestyle{headings}		
\title{Duality and universal models for the meet-implication fragment of IPC}
\titlerunning{On duality and universal models}  
%
\author{Nick Bezhanishvili\inst{1} \and Dion Coumans\inst{2} \and \\ Sam van Gool\inst{2,3} \and Dick de Jongh\inst{1}}
\authorrunning{N. Bezhanishvili, D. C. S. Coumans, S. J. van Gool, D. H. J. de Jongh} 
%
\tocauthor{N. Bezhanishvili, D. C. S. Coumans, S. J. van Gool and D. H. J. de Jongh}

\institute{ILLC, Faculty of Science, University of Amsterdam \\
Science Park 107, 1098 XG Amsterdam, The Netherlands
\and
IMAPP, Faculty of Science, Radboud University Nijmegen \\
P. O. Box 9010, 6500 GL Nijmegen, The Netherlands
\and
Mathematical Institute, University of Bern\\
Sidlerstrasse 5, 3012 Bern, Switzerland}

\maketitle              

\begin{abstract}
In this paper we investigate the
fragment of intuitionistic logic which only uses conjunction (meet) and implication, using finite duality for distributive lattices and universal models. 
We give a description of the finitely generated universal models of this fragment and 
give a complete characterization of the up-sets of Kripke models of intuitionistic logic which can be defined by meet-implication-formulas. We use these
results to derive a new version of subframe formulas for intuitionistic logic and 
to show that the uniform interpolants of meet-implication-formulas
are not necessarily uniform interpolants in the full intuitionistic logic.

\keywords{Duality, universal models, intuitionistic logic, Heyting algebras, free algebras, implicative semilattices, definability, interpolation}
\end{abstract}

\section{Introduction}
\label{sec:intro}

Heyting algebras are the algebraic models of intuitionistic propositional logic, IPC. In this paper we will be concerned with the syntactic fragment of IPC consisting of the formulas which only use the connectives of conjunction ($\wedge$) and implication ($\to$), but no disjunction ($\vee$) or falsum ($\bot$). The algebraic structures corresponding to this fragment are called implicative semilattices\footnote{In less recent literature, these are also called Brouwerian semilattices.}. A result due to Diego \cite{Diego} says that the variety of implicative semilattices is locally finite, i.e., finitely generated algebras are finite, or equivalently, the finitely generated free algebras are finite. In logic terms, this theorem can be expressed as saying that there are only finitely many equivalence classes of $(\wedge,\to)$-formulas in IPC. 

One of the key results in this paper is a dual characterization of a $(\wedge,\to)$-subalgebra  of a given Heyting algebra generated by a finite set of elements (Theorem~\ref{thm:gensubdual}).
This theorem leads to Diego's theorem and a characterization of the $n$-universal models of the $(\wedge,\to)$-fragment of IPC (Theorem~\ref{thm:diegostrong}) as submodels of the universal model, in the same spirit as the proof by Renardel de Lavalette et al. in \cite{RenHenJon}. The first characterization of this model was obtained by K\"ohler \cite{Kohler} using his duality for finite implicative meet-semilattices. Our slightly different approach in this paper also enables us to obtain new results about the $(\wedge,\to)$-fragment of IPC. In particular, in Theorem~\ref{thm:characterization}, we give a full characterization of the up-sets of a Kripke model which can be defined by $(\wedge,\to)$-formulas. Since our characterization in particular applies to the $n$-universal model of IPC, this may be considered as a first step towards solving the complicated problem of characterizing the up-sets of the 
$n$-universal models which are definable by intuitionistic formulas (also see our more detailed remarks in Section~\ref{sec:conclusion}). 
Building on this result, we use the de Jongh formulas for IPC to construct formulas that play an analogous role 
in the $(\wedge,\to)$-fragment.  
Finally, we use the characterization of $(\wedge,\to)$-definable subsets of the $n$-universal models of IPC to show that 
a uniform interpolant of a $(\wedge,\to)$-formula in intuitionistic logic may not be equivalent to a $(\wedge,\to)$-formula.\\

A word on methodology. The two essential ingredients to our proofs are, on the one hand, Birkhoff duality for finite distributive lattices and, on the other hand, the theory of $n$-universal models for IPC (\cite{deJ68}, \cite{NBezh}). Our methods in this paper are directly inspired by the theory of duality for $(\wedge, \to)$-homomorphisms as developed in \cite{Kohler}, \cite{ChaZak1997}, \cite{GBezhJans}, \cite{BezhBezhCan}, and also by the observations about the relation between the $n$-universal models and duality for Heyting algebras in \cite{Geh2013Esakia}. However, we made an effort to write this paper in such a way to be as self-contained as possible, and in particular we do not require the reader to be familiar with any of these results. In particular, we give a brief introduction to duality for finite distributive lattices and its connection to Kripke semantics for IPC in Section~\ref{sec:prelim}, and we do not need to go into the intricacies of duality for implicative meet-semilattices, instead opting to give direct proofs of the duality-theoretic facts that we need.\\

The paper is organized as follows: in Section~\ref{sec:prelim} we present the necessary preliminaries about IPC and Heyting algebras in the context of duality for distributive lattices; in Section~\ref{sec:sepmeetimp} we study the meet-implication fragment of IPC and prove our main theorems mentioned above; in Section~\ref{sec:subframeunifint} we apply these results to $(\wedge,\to)$-de Jongh formulas and analyze semantically the uniform interpolation in the $(\wedge,\to)$-fragment of IPC. In Section~\ref{sec:conclusion} we summarize our results and give suggestions on where to go from here.

\section{Algebra, semantics and duality}
\label{sec:prelim}
We briefly outline the contents of this section. In Subsection~\ref{subsec:adjunction}, we recall the definitions and basic facts about adjunctions between partially ordered sets, Heyting algebras, and implicative meet-semilattices. Subsection~\ref{subsec:duality} contains the preliminaries about duality theory that we will need in this paper. In Subsection~\ref{subsec:semanticsduality} we show how to define the usual Kripke semantics for IPC via duality, and in Subsection~\ref{subsec:canunivmod} we recall how the universal and canonical models for IPC are related to free finitely generated Heyting algebras via duality.
\subsection{Adjunction, Heyting algebras, implicative meet-semilattices}\label{subsec:adjunction}
Since the notion of adjunction is crucial to logic in general, and in particular to intuitionistic logic, we recall some basic facts about it right away. An adjunction can be understood as an invertible rule that ties two logical connectives or terms. The typical example in intuitionistic logic is the adjunction between $\wedge$ and $\to$, which can be expressed by saying that the following (invertible) rule is derivable in IPC.
\begin{equation}\label{eq:adjrule}
\begin{tikzpicture}[baseline=(current  bounding  box.center)]
\node at (0,.5) {$p \wedge q \vdash r$};
\draw (-1,.22) -- (1,.22);
\draw (-1,.28) -- (1,.28);
\node at (0,0) {$p \vdash q \to r$};
\end{tikzpicture}
\end{equation}
Recall that an {\it adjunction} between partially ordered sets $A$ and $B$ is a pair of functions $f : A \leftrightarrows B : g$ such that, for all $a \in A$ and $b \in B$, $f(a) \leq b$ if, and only if, $a \leq g(b)$; notation: $f \dashv g$. In this case, we say that $f$ is {\it lower adjoint} to $g$ and $g$ is {\it upper adjoint} to $f$. Note that the derivability of rule (\ref{eq:adjrule}) in IPC says exactly that, for any $\psi$, the function $\varphi \mapsto \varphi \wedge \psi$ on the Lindenbaum algebra for IPC (cf. Example~\ref{exa:heytingalgebras}(b) below) is lower adjoint to the function $\chi \mapsto \psi \to \chi$. The following general facts about adjunctions are well-known and will be used repeatedly in this paper.
\begin{proposition}\label{prop:adjbasicfacts}
Let $A$ and $B$ be partially ordered sets and let $f : A \leftrightarrows B : g$ be an adjunction. The following properties hold:
\begin{enumerate}
\item If $f$ is surjective, then $fg = \id_B$, and therefore $g$ is injective and the image of $g$ is $\{a \in A \ | \ gf(a) \leq a\}$;
\item The function $f$ preserves any joins (suprema) which exist in $A$ and the function $g$ preserves any meets (infima) which exist in $B$;
\item For any $b \in B$, $g(b)$ is the maximum of $\{a \in A \ | \ f(a) \leq b\}$. In particular, the fact that $g$ is upper adjoint to $f$ uniquely determines $g$.
\end{enumerate}
Moreover, if $C$ and $D$ are complete lattices and $f : C \to D$ is a function which preserves arbitrary joins, then $f$ has an upper adjoint.
\end{proposition}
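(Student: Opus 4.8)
The plan is to treat the three numbered items and the final (``moreover'') clause separately: the numbered items are routine manipulations of the defining inequality, while the last clause carries the actual content. I would organize the proof around two facts extracted immediately from the adjunction $f \dashv g$. First, both $f$ and $g$ are monotone: if $a \le a'$ in $A$, then from $f(a') \le f(a')$ we get $a' \le g(f(a'))$, hence $a \le g(f(a'))$, hence $f(a) \le f(a')$, and dually for $g$. Second, the two ``unit/counit'' inequalities $a \le gf(a)$ and $fg(b) \le b$ hold, obtained by instantiating $b := f(a)$, resp.\ $a := g(b)$, in the adjunction.

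From these, the numbered items are short. For (3): $fg(b) \le b$ puts $g(b)$ in $\{a \in A \mid f(a) \le b\}$, and $f(a) \le b \Rightarrow a \le g(b)$ shows it is the largest element of that set; the ``in particular'' is just uniqueness of maxima. For (1): if $f$ is surjective, pick $a$ with $f(a) = b$; then $f(a) \le b$ gives $a \le g(b)$, and monotonicity of $f$ gives $b = f(a) \le fg(b) \le b$, so $fg = \id_B$. Injectivity of $g$ is then formal, and for the image one uses $a \le gf(a)$: if $gf(a) \le a$ then $a = gf(a) = g(f(a))$ lies in the image, while conversely $gf(g(b)) = g(fg(b)) \le g(b)$ shows every $g(b)$ satisfies the condition. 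For (2): if $\bigvee_i a_i$ exists, monotonicity makes $f(\bigvee_i a_i)$ an upper bound of the $f(a_i)$; and if $b$ is any upper bound, then $a_i \le g(b)$ for all $i$ forces $\bigvee_i a_i \le g(b)$, i.e.\ $f(\bigvee_i a_i) \le b$. The claim about $g$ is the order dual.

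For the ``moreover'' clause, guided by (3) I would simply \emph{define} $g : D \to C$ by
\[
g(d) := \bigvee\nolimits_C \{ c \in C \mid f(c) \le d \},
\]
which makes sense because $C$ is complete (and equals the bottom of $C$ when the set is empty). It then remains to check $f(c) \le d \iff c \le g(d)$. The direction ``$\Rightarrow$'' is immediate, since $c$ then lies in the set over which the join is taken. For ``$\Leftarrow$'', note first that a join-preserving map is monotone (if $c \le c'$ then $f(c') = f(c \vee c') = f(c) \vee f(c') \ge f(c)$), so $c \le g(d)$ gives $f(c) \le f(g(d))$; and since $f$ preserves this particular join,
\[
f(g(d)) = \bigvee\nolimits_D \{ f(c') \mid f(c') \le d \} \le d,
\]
the last inequality holding because $d$ is by construction an upper bound of that set of values. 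Hence $f(c) \le d$.

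I do not expect a genuine obstacle here. The only point worth flagging is that the final display relies on $f$ preserving a possibly infinite, possibly empty join — precisely what the hypothesis grants. I would also remark that completeness of $D$ is not actually used in this argument; it is presumably included only for symmetry with the dual fact that a meet-preserving map between complete lattices has a lower adjoint.
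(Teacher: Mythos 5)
Your proof is correct, and since the paper itself gives no argument here (it simply records the facts as ``straightforward'' with a pointer to Davey--Priestley), your write-up is exactly the standard derivation being referenced: extract monotonicity and the unit/counit inequalities $a \le gf(a)$, $fg(b) \le b$ from the defining equivalence, deduce (1)--(3), and construct the upper adjoint as $g(d) = \bigvee\{c \mid f(c) \le d\}$. Your closing observation that completeness of $D$ is not actually needed is also accurate.
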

\begin{proof}
Straightforward; cf., e.g., \cite[7.23--7.34]{DavPri2002}.\qed
\end{proof}

Recall that a tuple $(A, \wedge, \vee, \to, 0, 1)$ is a {\it Heyting algebra} if $(A, \wedge, \vee, 0, 1)$ is a bounded lattice, and the operation $\to$ is upper adjoint to $\wedge$, i.e., for any $a, b, c \in A$,
\begin{equation}
\label{eq:heyting}
a \wedge b \leq c \iff a \leq b \to c.
\end{equation}
The equation (\ref{eq:heyting}) says that $b \to c$ is the maximum of $\{a \in A \ | \ a \wedge b \leq c\}$; therefore, a lattice admits at most one ``Heyting implication'', i.e., an operation $\to$ such that it becomes a Heyting algebra. The lattices underlying Heyting algebras are always distributive (in fact, for any $a \in A$, the function $b \mapsto a\wedge b$ preserves any join that exists in $A$, since it is a lower adjoint). All finite distributive lattices admit a Heyting implication. A {\it Heyting homomorphism} is a map between Heyting algebras that preserves each of the operations.
An implicative meet-semilattice is a ``Heyting algebra without disjunction''. More precisely, an {\it implicative meet-semilattice} is a tuple $(A,\wedge,\to)$ such that $(A,\wedge)$ is a semilattice, and condition (\ref{eq:heyting}) holds. We will write $(\wedge,\to)$-homomorphism to abbreviate ``homomorphism of implicative meet-semilattices''. Note that any implicative meet-semilattice has a largest element, $1$, which is preserved by any $(\wedge,\to)$-homomorphism. Also note that finite implicative meet-semilattices are distributive lattices, but $(\wedge,\to)$-homomorphisms do not necessarily preserve joins. However, {\it surjective} $(\wedge,\to)$-homomorphisms do preserve join (cf. \cite[Lemma 2.4 and the remark thereafter]{Kohler}):
\begin{lemma}\label{lem:surjheyt}
If $f : A \to B$ is a surjective $(\wedge,\to)$-homomorphism between Heyting algebras, then $f$ is join-preserving.
\end{lemma}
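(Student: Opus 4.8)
The plan is to reduce to proving a single inequality and then to obtain it from surjectivity via the Heyting identity that distributes $\to$ over $\vee$. Since $f$ preserves $\wedge$ it is order-preserving, so from $a_i \le a_1\vee a_2$ we get $f(a_1)\vee f(a_2)\le f(a_1\vee a_2)$ (the join on the left taken in $B$). It therefore suffices to establish the reverse inequality $f(a_1\vee a_2)\le f(a_1)\vee f(a_2)$, and it is enough to do this for binary joins; the nullary case (preservation of the bottom element $0$) will then follow from $f(0)\le f(c)=b$ for every $b=f(c)\in B$ together with the fact that $B$ has a least element.

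For the binary case, set $b:=f(a_1)\vee f(a_2)\in B$ and, using surjectivity of $f$, choose $c\in A$ with $f(c)=b$. Then $f(a_1)\le f(c)$ and $f(a_2)\le f(c)$, so in the Heyting algebra $B$ we have $f(a_1)\to f(c)=f(a_2)\to f(c)=1$. Using that $f$ preserves $\wedge$, $\to$ and $1$, together with the identity $(a_1\vee a_2)\to c=(a_1\to c)\wedge(a_2\to c)$ valid in any Heyting algebra, I would compute
\[
f(a_1\vee a_2)\to f(c)=f\bigl((a_1\vee a_2)\to c\bigr)=f\bigl((a_1\to c)\wedge(a_2\to c)\bigr)=\bigl(f(a_1)\to f(c)\bigr)\wedge\bigl(f(a_2)\to f(c)\bigr)=1 .
\]
Hence $f(a_1\vee a_2)\le f(c)=b=f(a_1)\vee f(a_2)$, as desired.

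The routine ingredients to check are: a $\wedge$-semilattice homomorphism is monotone; in a Heyting algebra $x\le y$ iff $x\to y=1$; and $(a_1\vee a_2)\to c=(a_1\to c)\wedge(a_2\to c)$, which is a one-line adjunction argument using only distributivity of the underlying lattice (automatic for Heyting algebras). I do not expect a serious obstacle, but the one genuinely essential idea --- and the only place surjectivity is used --- is the move of pulling the join $f(a_1)\vee f(a_2)$ back to an element $c$ of $A$: this is what lets us re-express a statement about joins in $B$ purely in terms of $\wedge$ and $\to$ in $A$, sidestepping the circularity of trying to apply $f$ to a join before knowing that $f$ preserves joins.
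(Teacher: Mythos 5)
Your proposal is correct and follows essentially the same route as the paper's proof: pull the join $f(a_1)\vee f(a_2)$ back to some $c\in A$ by surjectivity, then show $f(a_1\vee a_2)\to f(c)=1$ using the Heyting identity $(a_1\vee a_2)\to c=(a_1\to c)\wedge(a_2\to c)$, and handle $0$ separately via surjectivity. The only cosmetic difference is that the paper concludes the computation by rewriting $\bigl(f(a_1)\to f(c)\bigr)\wedge\bigl(f(a_2)\to f(c)\bigr)$ as $\bigl(f(a_1)\vee f(a_2)\bigr)\to f(c)$ before evaluating it to $1$, whereas you evaluate each conjunct directly.
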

\begin{proof}
First of all, we have $0_B = f(a)$ for some $a \in A$, and $0_A \leq a$, so that $f(0_A) = 0_B$. Now let $a, a' \in A$. Pick $c \in A$ such that $f(c) = f(a) \vee f(a')$. Now 
\begin{align*}
f(a \vee a') \to (f(a) \vee f(a')) &= f(a \vee a') \to f(c) \\
&= f((a \vee a') \to c) \\
&= f((a \to c) \wedge (a' \to c)) \\
&= (f(a) \to f(c)) \wedge (f(a') \to f(c)) \\
&= (f(a) \vee f(a')) \to f(c) = 1,
\end{align*}
so $f(a \vee a') \leq f(a) \vee f(a')$. The other inequality holds because $f$ is order-preserving.\qed
\end{proof}

\begin{example}\label{exa:heytingalgebras}
(a) An important example of a Heyting algebra is the collection of upward closed sets (`up-sets') of a partially ordered set $(X,\leq)$, ordered by inclusion; we denote this Heyting algebra by $\mathcal{U}(X)$. The Heyting implication of two up-sets $U$ and $V$ is given by the formula
\begin{equation}
\label{eq:heytingupsets}
U \rightarrow V = ({\downarrow}(U \cap V^c))^c,
\end{equation}
that is, a point $x$ is in $U \rightarrow V$ if, and only if, for all $y \geq x$, $y \in U$ implies $y \in V$. The reader who is familiar with models for IPC will recognize the similarity between this condition and the interpretation of a formula $\varphi \to \psi$ in a model; we will recall the precise connection between the two in \ref{subsec:semanticsduality} below.

(b) Another example of a Heyting algebra, of a more logical nature, is that of the Lindenbaum algebra for IPC; we briefly recall the definition. Fix a set of propositional variables $P$ and consider the collection $F(P)$ of all propositional formulas whose variables are in $P$. Define a pre-order $\preceq$ on $F(P)$ by saying, for $\varphi, \psi \in F(P)$, that $\varphi \preceq \psi$ if, and only if, $\psi$ is provable from $\varphi$ in IPC. The Lindenbaum algebra is defined as the quotient of $F(P)$ by the congruence relation \break ${\approx} := (\preceq) \cap (\preceq)^{-1}$. The Lindenbaum algebra is the free Heyting algebra over the set $P$, i.e., any function from $P$ to a Heyting algebra $H$ lifts uniquely to a Heyting homomorphism from the Lindenbaum algebra over $P$ to $H$. We will denote the free Heyting algebra over $P$ by $F_{HA}(P)$. Note that the same construction can be applied to the $(\wedge,\vee)$- and $(\wedge,\to)$-fragments of IPC to yield the free distributive lattice $F_{DL}(P)$ and the free implicative meet-semilattice $F_{\wedge,\to}(P)$, respectively. \qed
\end{example}

\subsection{Duality}
\label{subsec:duality}
We briefly recall the facts about duality that we will need. Let $D$ be a distributive lattice. We recall the definition of the {\it dual poset}, $D_*$, of $D$. The points of $D_*$ are the prime filters of $D$, i.e., up-sets $F \subseteq D$ which contain finite meets of their subsets and have the property that if $a \vee b \in F$, then $a \in F$ or $b \in F$. The partial order on $D_*$ is the inclusion of prime filters. The map $\eta : D \to \U(D_*)$ which sends $d \in D$ to $\{F \in D_* \ | \ d \in F\}$ is (assuming the axiom of choice) an embedding of distributive lattices, which is called the {\it canonical extension} of $D$. If $D$ is finite, then $\eta$ is an isomorphism, so that any finite distributive lattice is isomorphic to the lattice of up-sets of its dual poset. The assignments $X \mapsto \U(X)$ and $D \mapsto D_*$ between finite posets and finite distributive lattices extend to a dual equivalence, or duality, of categories: homomorphisms from a distributive lattice $D$ to a distributive lattice $E$ are in a natural bijective correspondence with order-preserving maps from $E_*$ to $D_*$. A homomorphism $h : D \to E$ is sent to the map $h_* : E_* \to D_*$ which sends $F \in E_*$ to $h^{-1}(F)$, and an order-preserving map $f : X \to Y$ is sent to the homomorphism $f^* : \U(Y) \to \U(X)$ which sends an up-set $U$ of $Y$ to $f^{-1}(U)$.

If $X$ and $Y$ are posets, it is natural to ask which order-preserving maps $f : X \to Y$ are such that their dual $f^{-1} : \U(Y) \to \U(X)$ is a Heyting homomorphism. It turns out that these are the {\it p-morphisms}, i.e., the order-preserving maps which in addition satisfy the condition: for any $x \in X$, $y \in Y$, if $f(x) \leq y$, then there exists $x' \geq x$ such that $f(x') = y$. 

To end this subsection, we recall how duality yields a straight-forward description of the free finitely generated\footnote{Essentially the same argument as the one sketched in this paragraph can be used to give a description of an arbitrary, not necessarily finitely generated, free distributive lattice, but we will not need this in what follows.} distributive lattice, $F_{DL}(P)$. In any category of algebras, the free algebra over a set $P$ is the $P$-fold coproduct of the one-generated free algebra. Therefore, since duality transforms coproducts into products, the dual space $F_{DL}(P)_*$ is the $P$-fold power of the poset $F_{DL}(\{p\})_*$, the dual of the one-generated free algebra. Note that $F_{DL}(\{p\})$ is the three-element chain $\{0 \leq p \leq 1\}$, so its dual is the two-element poset $2 = \{0,1\}$. Since finite products in the category of finite posets are simply given by equipping the Cartesian product with the pointwise order, it follows that $F_{DL}(P)_* = 2^P$. Therefore, the free distributive lattice over a finite set $P$ is the lattice of up-sets of $2^P$; in a formula, $F_{DL}(P) = \U(2^P)$.
\subsection{Semantics via duality}
\label{subsec:semanticsduality}
{\bf Notation.} Throughout the rest of this paper, we fix a finite set of propositional variables $P = \{p_1,\dots,p_n\}$. We denote the free algebras over $P$ by $F_{HA}(n)$, $F_{DL}(n)$, etc.

In this paper, a {\it frame} is a poset $(M,\leq)$. A {\it model} is a triple $(M,\leq,c)$, where $(M,\leq)$ is a poset and $c$, the {\it colouring}, is an order-preserving function from $M$ to $2^n$. 
The colouring $c$ yields, via duality, a distributive lattice homomorphism $c^* : \U(2^n) \to \U(M)$. As noted at the end of \ref{subsec:duality}, $\U(2^n)$ is the free distributive lattice over the set of generators $n$. By the universal property of the free Heyting algebra, the lattice homomorphism $c^*$ has a unique extension to a Heyting homomorphism, $v$, from the free $n$-generated Heyting algebra to the Heyting algebra $\U(M)$, as in diagram (\ref{eq:semantics}).

\begin{equation}
\label{eq:semantics}
\begin{tikzpicture}[node distance=2cm, auto, baseline=(current  bounding  box.center)]
  \node (DL) {$F_{DL}(n)$};
  \node (is) [left of=DL,xshift=7mm] {$(2^n)^* =$};
  \node (HA) [above of=DL] {$F_{HA}(n)$};
  \node (UF) [right of=DL] {$\U(M)$};
  \draw[right hook->] (DL) to node {} (HA);
  \draw[->, dashed] (HA) to node {$v$} (UF);
  \draw[->] (DL) to node {$c^*$} (UF);
\end{tikzpicture}
\end{equation}
A point $x$ in a model $M$ is said to {\it satisfy} a formula $\varphi$ if, and only if, $x \in v(\varphi)$; we employ the usual notation: $M, x \models \varphi$. Note that, as an alternative to the above algebraic description, one may equivalently define the satisfaction relation for models by induction on the complexity of formulas; see e.g. \cite[Def. 2.1.8]{NBezh}. A model is said to satisfy $\varphi$ if every point of the model satisfies $\varphi$. A {\it p-morphism} $f$ from a model $M$ to a model $N$ is a p-morphism between the underlying frames of $M$ and $N$ which in addition satisfies, for any $x \in M$, $c_N(f(x)) = c_M(x)$. From the above definitions, it is clear that p-morphisms preserve truth, i.e., $M, x \models \varphi$ if, and only if, $N, f(x) \models \varphi$, for any formula $\varphi$. A {\it generated submodel} of $M$ is a submodel $M'$ such that the inclusion $f : M' \hookrightarrow M$ is a p-morphism, or equivalently, such that $M'$ is an up-set of $M$. We say $M'$ is a {\it p-morphic image} of $M$ if there exists a surjective p-morphism $f : M \twoheadrightarrow M'$.

Recall that a {\it general frame} is a tuple $(M,\leq,A)$, where $(M,\leq)$ is a poset and $A$ is a subalgebra of the Heyting algebra of up-sets of $M$. The elements of the algebra $A$ are called the {\it admissible sets} of the general frame. An important subclass of the class of general frames consists of the $(M,\leq,A)$ for which $(M,\leq)$ is the dual poset of the Heyting algebra $A$; these are precisely the {\it descriptive} general frames.\footnote{For an equivalent characterization of descriptive general frames as the `compact refined' general frames, cf. e.g. \cite[Def. 2.3.2, Thm. 2.4.2]{NBezh}.}

An {\it admissible colouring} on a general frame $(M,\leq,A)$ is a colouring $c : M \to 2^n$ with the additional property that, for each $1 \leq i \leq n$, the set $\{x \in M \ | \ c(x)_i = 1\}$ is admissible.
By the latter description and duality, admissible colourings $c$ on a descriptive frame $(M,\leq,A)$ correspond to homomorphisms $c^* : F_{DL}(n) \to A$. 
Note that, in this case, the semantics map $v$ defined in (\ref{eq:semantics}) also maps into $A$, since $A$ is a sub-Heyting-algebra of $\mathcal{U}(M)$.

We finally recall a few definitions and observations about so-called ``borders'' in Kripke models, that we will need in what follows.
\begin{definition}\label{def: borderP}
Let $M$ be a Kripke model.
\begin{enumerate}
\item If $A$ is an up-set in a Kripke frame $(M,\leq)$, then a \emph{border point of $A$} is a maximal element of the complement of $A$, i.e., a point $u$ which is not in $A$, while all its proper successors are in $A$. 
\item If $\varphi$ is a propositional formula, then a point $u$ is called a \emph{$\varphi$-border point} if $u$ is a border point of $v(\varphi)$, the subset of $M$ where $\varphi$ holds.
\item We say that $M$ is  a \emph{model with borders}, or that $M$ \emph{has borders}, if, for every $x \in M$ such that $x \not\models p$, there is a $p$-border point $u$ above $x$.
\end{enumerate}
\end{definition}
\begin{proposition}\label{prop:borders}
\quad
\begin{enumerate}
\item Every image-finite model has borders.
\item Every descriptive model has borders.
\end{enumerate}
\end{proposition}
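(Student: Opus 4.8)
The plan is to reduce both parts to a single claim: whenever a point $x$ does not satisfy the variable $p$ (one of our fixed $p_1,\dots,p_n$), the set $M \setminus v(p)$ has a maximal element lying above $x$; by Definition~\ref{def: borderP} such an element is exactly a $p$-border point above $x$, which is what it means for $M$ to have borders. Note that $v(p)$ is an up-set of $M$ (truth is preserved upwards, since colourings are order-preserving), and in the descriptive case $v(p)$ is moreover an admissible up-set, because the colouring is admissible and $v$ maps into $A$.

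Part (1) is then immediate. If $M$ is image-finite, then $\uparrow x$ is finite, hence so is $\uparrow x \setminus v(p)$, and this set is non-empty because it contains $x$. A non-empty finite poset has a maximal element $u$. Then $u \not\models p$, and any $y > u$ satisfies $y \geq x$, so $y \notin \uparrow x \setminus v(p)$ forces $y \in v(p)$; thus $u$ is maximal in $M \setminus v(p)$ and is the desired $p$-border point above $x$.

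Part (2) carries the real content. I would use the concrete description of a descriptive general frame $(M,\leq,A)$ as (isomorphic to) the dual of the Heyting algebra $A$: the points of $M$ are the prime filters of $A$, ordered by inclusion, and the admissible set attached to $a \in A$ is $\widehat a = \{F \in M \mid a \in F\}$. Since the colouring is admissible, $v(p) = \widehat a$ for some $a \in A$. Fix a point $x$ with $x \not\models p$, i.e.\ a prime filter $x$ with $a \notin x$, and consider the family $\mathcal P$ of all filters $F$ of $A$ with $x \subseteq F$ and $a \notin F$, ordered by inclusion. It is non-empty and closed under unions of chains, so Zorn's Lemma gives a maximal element $u \in \mathcal P$. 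I would then check two things. First, $u$ is a prime filter: if $b \vee c \in u$ while $b,c \notin u$, then by maximality $a$ lies in the filters generated by $u \cup \{b\}$ and by $u \cup \{c\}$, giving $u_1,u_2 \in u$ with $u_1 \wedge b \leq a$ and $u_2 \wedge c \leq a$; setting $d = u_1 \wedge u_2 \in u$ yields $a \geq (d \wedge b) \vee (d \wedge c) = d \wedge (b \vee c)$, and the right-hand side lies in $u$, contradicting $a \notin u$. So $u$ is a proper prime filter, i.e.\ a point of $M$, with $u \supseteq x$ and $u \not\models p$. Second, $u$ is maximal in $M \setminus v(p)$: any prime filter $y$ with $y \supsetneq u$ also contains $x$ and, if it avoided $a$, would belong to $\mathcal P$ strictly above $u$ — impossible; hence every proper successor of $u$ contains $a$, i.e.\ lies in $v(p)$. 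Thus $u$ is a $p$-border point above $x$.

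\smallskip
The main obstacle is Part (2): one must recall the dual description of descriptive frames as duals of Heyting algebras (so that $v(p)$ becomes $\widehat a$ for an algebra element), and the delicate point of the Zorn argument is verifying that the maximal filter $u$ is prime. An equivalent, more topological route — perhaps closer to the ``compact refined'' characterisation — is available: in the Priestley/Esakia topology on $M$ generated by the admissible sets and their complements, $\uparrow x$ is closed (by refinedness) and $M \setminus v(p)$ is clopen, so $C := \uparrow x \cap (M \setminus v(p))$ is compact and non-empty; any chain in $C$ has an upper bound in $C$ because the sets $\uparrow c_\alpha \cap C$ are closed with the finite intersection property, and Zorn's Lemma then produces a maximal point of $C$, which is again the required border point. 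Either way, what drives the existence of the maximal point is a compactness phenomenon (finiteness in Part (1), closure under chains or topological compactness in Part (2)).
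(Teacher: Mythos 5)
Your proof is correct, and it is in fact more than the paper offers: the authors declare part (1) straightforward and simply refer to \cite{NBezh} (Thm.\ 2.3.24) for part (2), so you are supplying details they omit rather than taking a different route from an argument they actually write down. Your part (1) is exactly the finiteness argument the word ``straightforward'' is meant to cover. For part (2) you work directly with the dual description of a descriptive frame as the poset of prime filters of $A$, writing $v(p) = \widehat a$ for an algebra element $a$, applying Zorn's lemma to the filters extending $x$ and omitting $a$, and checking primeness of the maximal such filter via distributivity; this is the classical optimal-filter argument and it is sound. The only points worth being explicit about are the ones you already flag: the union of a chain of filters omitting $a$ is again a filter omitting $a$, and any prime filter strictly above $u$ is in particular a member of your family $\mathcal{P}$, so maximality of $u$ applies to it. Your topological variant, using that ${\uparrow}x \cap v(p)^c$ is compact and that chains in it have upper bounds, is essentially the proof of the cited theorem in \cite{NBezh} (descriptive frames being the Esakia spaces, in which every point of a nonempty closed set has a maximal point of that set above it). Either route is a complete and correct replacement for the citation.
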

\begin{proof}
Item (1) is straightforward. For the proof of (2) see, e.g., \cite[Thm. 2.3.24]{NBezh}.\qed
\end{proof}

\subsection{Canonical and universal models}\label{subsec:canunivmod}
The dual poset of the free $n$-generated Heyting algebra, $F_{HA}(n)$, is called the {\it canonical frame} and is denoted by $C(n)$. In logic terms, points in the canonical frame are so-called ``theories with the disjunction property''.
The canonical frame carries a natural colouring $c$, which is the dual of the inclusion $F_{DL}(n) \hookrightarrow F_{HA}(n)$. Concretely, $c(x)_i = 1$ if, and only if, the variable $p_i$ is an element of $x$. The model thus defined is called the {\it canonical model}, and is also denoted by $C(n)$.\footnote{The canonical frame and model are also known as the Henkin frame and model.} 

Note that, by the embedding $\eta : F_{HA}(n) \hookrightarrow \U(C(n))$, any element $\varphi$ of $F_{HA}(n)$ defines an up-set $\eta(\varphi) = \{x \in C(n) \ | \ \varphi \in x\}$ of $C(n)$. 
Since $\eta$ is in particular a Heyting homomorphism that extends $c^*$, it is equal to the semantics map $v$ for $C(n)$ defined in (\ref{eq:semantics}). Concretely, this means that, for any $x \in C(n)$ and $\varphi \in F_{HA}(n)$, we have $C(n), x \models \varphi$ if, and only if, $\varphi \in x$; this fact is often referred to as the {\it truth lemma}.

Let $\widehat{F_{HA}(n)}$ be the profinite completion of $F_{HA}(n)$; recall from \cite[Thm 4.7]{BezGehMinMor2006} that $\widehat{F_{HA}(n)}$ is the Heyting algebra of up-sets of $C(n)_{\mathrm{fin}} := \{x \in C(n) \ | \ {\uparrow}x \text{ is finite}\}$, the image-finite\footnote{Recall that a model $M$ is called \emph{image-finite} if, for each $w\in M$, the set of successors of $w$ is finite.} part of $C(n)$. The generated submodel $C(n)_{\mathrm{fin}}$ of $C(n)$ is known as the {\it universal model} and denoted by $U(n)$. 
\begin{lemma} \label{lem:univinjective}
The map $v : F_{HA}(n) \to \U(U(n))$ is injective.
\end{lemma}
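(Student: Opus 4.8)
My plan is to identify $v$, via the known description of the profinite completion of a free Heyting algebra, with the canonical map of $F_{HA}(n)$ into its profinite completion, and then to deduce injectivity from the residual finiteness of $F_{HA}(n)$, which is a reformulation of the finite model property for IPC. Equivalently --- and this is a useful way to see what the essential content is --- one can use that $v$ is a homomorphism of Heyting algebras to reduce the injectivity of $v$ to the completeness of IPC with respect to the $n$-universal model, and then appeal to that completeness.

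In more detail, I would proceed as follows. First, since $v$ is a homomorphism of Heyting algebras, its kernel congruence corresponds to the filter $v^{-1}(U(n))$ --- here $U(n)$ denotes the top element of $\U(U(n))$ --- under the standard bijection between Heyting congruences and filters (using that $v(a)=v(b)$ iff $v(a\leftrightarrow b)=U(n)$ iff $a\leftrightarrow b\in v^{-1}(U(n))$). So $v$ is injective if and only if $v^{-1}(U(n))=\{1\}$, i.e. if and only if $v(\chi)=U(n)$ implies $\chi=1$ in $F_{HA}(n)$ for every $\chi$. Unwinding the two sides, this says exactly: if $U(n)\models\chi$ then $\vdash_{\mathrm{IPC}}\chi$, i.e. IPC is complete with respect to $U(n)$. (For instance, from $v(\varphi)=v(\psi)$ one then gets $v(\varphi\to\psi)=v(\psi\to\varphi)=U(n)$, hence $\varphi\to\psi=\psi\to\varphi=1$, hence $\varphi=\psi$.) This completeness is part of the standard theory of universal models and could simply be cited from \cite{deJ68} or \cite{NBezh}. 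A self-contained way to obtain it is algebraic: by \cite[Thm 4.7]{BezGehMinMor2006} we have $\U(U(n))=\widehat{F_{HA}(n)}$, the profinite completion of $F_{HA}(n)$, and under this identification $v$ is the canonical map $F_{HA}(n)\to\widehat{F_{HA}(n)}$ --- both are Heyting homomorphisms sending each generator $p_i$ to $\{x\in U(n)\ |\ p_i\in x\}$, hence they agree. The canonical map into the profinite completion is injective precisely when $F_{HA}(n)$ is residually finite, and this residual finiteness is a reformulation of the finite model property for IPC: if $\alpha$ and $\beta$ are non-equivalent formulas, say with $\not\vdash_{\mathrm{IPC}}\alpha\to\beta$, then the finite model property provides a finite Heyting algebra $H$ and a homomorphism $h:F_{HA}(n)\to H$ with $h(\alpha)\not\le h(\beta)$, and $h$ separates the classes of $\alpha$ and $\beta$.

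I expect the crux to be exactly this last point --- the completeness of $U(n)$, equivalently the residual finiteness of $F_{HA}(n)$, equivalently the finite model property for IPC. Nothing in the duality machinery recalled so far forces validity on all of the canonical frame $C(n)$ to descend to its image-finite part $U(n)$ (indeed, even the theory of the root of a finite Kripke model need not correspond to a point of $U(n)$), so an appeal to this classical, genuinely non-formal fact is unavoidable. Everything else --- the kernel/filter bookkeeping and the identification of $v$ with the profinite-completion map --- is routine.
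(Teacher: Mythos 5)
Your argument is correct. The paper itself does not prove this lemma at all --- it is a one-line citation of \cite[Thm 3.2.20]{NBezh} --- so any self-contained proof must supply exactly the content you isolate: completeness of IPC with respect to $U(n)$, equivalently residual finiteness of $F_{HA}(n)$, equivalently the finite model property. Your reduction of injectivity to triviality of the filter $v^{-1}(\top)$, the identification of $v$ with the canonical map into the profinite completion via the universal property of the free algebra, and the derivation of residual finiteness from the finite model property are all sound. Two remarks. First, there is a slightly more economical route that stays inside the paper's own toolkit and avoids the profinite completion: given $\varphi \neq \psi$ in $F_{HA}(n)$, the finite model property yields a finite model $M$ and a point $x$ with, say, $M, x \models \varphi$ and $M, x \not\models \psi$; since $M$ has finite depth, Proposition~\ref{prop:findepmap} gives a p-morphism $f : M \to U(n)$, and since p-morphisms preserve truth, $f(x) \in v(\varphi) \setminus v(\psi)$. (This is a harmless forward reference: Proposition~\ref{prop:findepmap} is stated just after the lemma and does not depend on it.) Second, your parenthetical claim that ``the theory of the root of a finite Kripke model need not correspond to a point of $U(n)$'' is in fact false: by the argument just sketched together with the truth lemma, that theory always equals $f(x)$ for the root $x$, which is a point of $U(n)$. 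This aside plays no role in your proof, but you should delete it; the correct observation motivating the appeal to the finite model property is simply that nothing recalled up to that point rules out two distinct elements of $F_{HA}(n)$ being separated only by prime filters outside the image-finite part $C(n)_{\mathrm{fin}}$.
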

\begin{proof}
Cf., e.g., \cite[Thm 3.2.20]{NBezh}.\qed
\end{proof}

Importantly, the universal model can be described by an inductive top-down construction, as follows.
\begin{theorem}\label{thm:unmodconst}
The universal model $U(n)$ is the unique image-finite model satisfying all of the following conditions:
\begin{enumerate}
\item there are $2^n$ maximal points with mutually distinct colours in $U(n)$;
\item for any $x \in U(n)$ and $c' < c(x)$, there is a unique point $x' \in U(n)$ with $c(x') = c'$ and ${\uparrow} x' = \{x'\} \cup {\uparrow} x$;
\item for any finite antichain $S \subseteq U(n)$ and $c' \leq \min\{c(x) \ | \ x \in S\}$, there is a unique point $x' \in U(n)$ with $c(x') = c'$ and ${\uparrow}x' = \{x'\} \cup \bigcup_{x \in S} {\uparrow} x$.
\end{enumerate}
\end{theorem}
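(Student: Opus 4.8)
The plan is to prove the statement in two halves: that the universal model $U(n) = C(n)_{\mathrm{fin}}$ does satisfy conditions~(1)--(3), and that any image-finite model satisfying~(1)--(3) is isomorphic to it. Throughout I would work with $U(n)$ as the image-finite part of the canonical model and use the truth lemma, so that a point of $U(n)$ is a prime filter $x$ of $F_{HA}(n)$ with ${\uparrow}x$ finite and $U(n),x\models\varphi \iff \varphi\in x$.

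For the existence half: condition~(1) follows because $x$ is maximal in $C(n)$ exactly when ${\uparrow}x=\{x\}$, and then the truth lemma forces satisfaction at $x$ to be two-valued, so $x$ is a complete consistent theory and hence determined by $c(x)\in 2^n$; conversely the one-point model of any colour $\varepsilon$ has a prime two-valued theory that no prime filter properly extends, giving a maximal point of colour $\varepsilon$. Thus there are exactly $2^n$ maximal points, with distinct colours, each image-finite. For~(2) and~(3), given a finite antichain $S\subseteq U(n)$ and a colour $c'\leq c(x)$ for all $x\in S$ (with $c'<c(x)$ in the singleton case, which is the split between the two conditions), I would put $W:=\bigcup_{x\in S}{\uparrow}x$ --- a finite up-set of $C(n)$ --- and adjoin a fresh point $x'$ of colour $c'$ below all of $W$ to form a finite model $M'$. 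The crux is that the semantics map $v':F_{HA}(n)\to\U(M')$ is still surjective: the up-sets of $M'$ are those of $W$ together with $M'$ itself; the semantics map onto $\U(W)$ is onto because $W$ is a finite generated submodel of $C(n)$, and every up-set of $W$ other than $W$ has a unique preimage under the restriction $\U(M')\to\U(W)$ and so is already in the image of $v'$; and $W$ itself is hit either by a variable $p_i$ (when $c'<c(x)$ forces some $p_i$ true throughout $W$ but false at $x'$) or, when $|S|\geq 2$, as a union of strictly smaller up-sets already in the image. Surjectivity of $v'$ means its dual embeds $M'$ onto a generated submodel of $C(n)$, yielding a point $x'$ with $c(x')=c'$ and ${\uparrow}x'=\{x'\}\cup W$ finite, so $x'\in U(n)$; and any $y\in C(n)$ with the same colour and up-set has ${\uparrow}y\cong M'$ as a coloured frame, hence the same theory as $x'$, hence $y=x'$.

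For the uniqueness half, let $M$ be image-finite and satisfy~(1)--(3). As $M$ is image-finite, every point has finite height, so I would build an isomorphism $M\to U(n)$ by induction on height: match the maximal points of $M$ and $U(n)$ by colour using~(1) on both sides, and for $x$ of height $k+1$ let $S$ be the antichain of minimal points of ${\uparrow}x\setminus\{x\}$, which $f$ has already sent isomorphically onto some $W\subseteq U(n)$, and use the existence-and-uniqueness clause of~(2) or~(3) for $U(n)$ to define $f(x)$ as the unique point of colour $c(x)$ whose set of proper successors is exactly $W$. The \emph{main obstacle}, and the only place where one must invoke the uniqueness assertions of~(2)--(3) about $M$ itself, is the rigidity of $M$: that this $f$ is well-defined, injective and \emph{surjective}, i.e. that $M$ has no redundant points. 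Two maximal points of equal colour are barred outright by~(1), and two distinct non-maximal points with equal colour and equal sets of proper successors are barred by the uniqueness clause of~(3) (or~(2)) applied to the antichain of minimal points of that common successor set; the remaining case --- a point sitting immediately below a single successor of the \emph{same} colour, which would both break the recursion and give a counterexample to surjectivity --- is the delicate one, and I expect to spend most of the effort ruling it out, presumably via image-finiteness and a minimal-counterexample argument.
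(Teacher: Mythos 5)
Your existence half is sound: showing that $U(n)=C(n)_{\mathrm{fin}}$ satisfies (1)--(3) by forming the abstract model $M'=W\cup\{x'\}$, checking that its semantics map is surjective, and dualizing to realize $x'$ as a point of the canonical model is a correct (if compressed) argument, and the uniqueness of the new point via ``isomorphic generated submodel $\Rightarrow$ same theory $\Rightarrow$ same point of $C(n)$'' is exactly right. (The paper itself offers no proof here --- it cites \cite[Sec.\ 3.2]{NBezh}, where $U(n)$ is \emph{defined} by the layered construction --- so there is nothing to compare against on that side.)

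The genuine gap is in the uniqueness half, and it is not the kind of gap you anticipate. You read condition (3) as applying only to antichains with at least two elements, with singletons handled by (2) under the strict inequality $c'<c(x)$. Under that reading the theorem is \emph{false}: take $U(1)$, adjoin a fresh point $y$ of colour $1$ with ${\uparrow}y=\{y\}\cup{\uparrow}m$ where $m$ is the maximal colour-$1$ point, and then close under (2) and (3) by repeatedly adding one fresh witness below each not-yet-witnessed (antichain, colour) pair. Every added point sits below existing ones and has a finite up-set, so the limit is image-finite; (1) is untouched; existence in (2)--(3) holds by construction and uniqueness holds because distinct antichains have distinct unions of up-sets. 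This model satisfies your reading of (1)--(3) but contains the redundant point $y$, so no minimal-counterexample or image-finiteness argument can rule the delicate case out --- it is a genuine model. What saves the theorem is precisely the degenerate instance of (3) that your reading discards: take $S=\{x\}$ a singleton and $c'=c(x)$; then $x'=x$ itself is a witness (since ${\uparrow}x=\{x\}\cup{\uparrow}x$), and the \emph{uniqueness} clause then asserts that no \emph{other} point has colour $c(x)$ and up-set $\{x'\}\cup{\uparrow}x$ --- which is exactly the prohibition of a point immediately below $x$ with the same colour. You need to invoke this instance explicitly (equivalently, add the hypothesis that every non-maximal point of $M$ is of one of the forms described in (2)--(3), i.e.\ that $M$ has no two distinct bisimilar points); once you do, your induction on depth goes through, provided you also record the corresponding fact for $U(n)$ itself (no point of $C(n)$ has a unique immediate successor of the same colour, because bisimilar points of the canonical model have equal theories and hence coincide), which your surjectivity step silently requires.
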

\begin{proof}
Cf., e.g., \cite[Sec. 3.2]{NBezh}.\qed
\end{proof}
The following important fact states a `universal property' for the universal model. Following the usual terminology (cf., e.g., \cite[Sec 3.1]{NBezh}), the {\it depth} of a point $w$ in a frame $M$ is the maximal length of a chain in the generated subframe ${\uparrow} w$. We say that a frame $M$ has {\it finite depth} $\leq m$ if every chain in $M$ has size at most $m$.
\begin{proposition}\label{prop:findepmap}
If $M$ is a model on $n$ variables of finite depth $\leq m$, then there exists a unique p-morphism $f : M \to U(n)$. Moreover, the image of $f$ has depth $\leq m$.
\end{proposition}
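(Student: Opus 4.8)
The plan is to construct $f$ explicitly by a top-down induction on depth, using the inductive description of $U(n)$ in Theorem~\ref{thm:unmodconst}, and to obtain uniqueness separately from the truth lemma. The key preliminary observation, proved by a routine induction on $d$ directly from the three clauses of Theorem~\ref{thm:unmodconst}, is that for each $d\ge 1$ the generated submodel $U(n)_{\le d}:=\{x\in U(n) : \mathrm{depth}(x)\le d\}$ is \emph{finite}: there are only $2^n$ points of depth $1$ by clause~(1), and any point of depth $d+1$ is determined by its colour together with the finite antichain of minimal points lying strictly above it, which is a subset of the finite set $U(n)_{\le d}$.

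For existence I will build, by induction on $d$, a coherent family of p-morphisms $f_d : M_{\le d}\to U(n)$ (where $M_{\le d}$ denotes the up-set of points of $M$ of depth $\le d$) whose images have depth $\le d$. For $d=1$, $M_{\le 1}$ is the antichain of maximal points of $M$, and $f_1$ sends $x$ to the unique maximal point of $U(n)$ with colour $c(x)$; this is clearly a colour-preserving p-morphism whose image is an antichain. For the step from $d$ to $d+1$: given $x\in M$ of depth $d+1$, the set ${\uparrow}x\setminus\{x\}$ is a generated submodel of $M_{\le d}$, so $T_x := f_d({\uparrow}x\setminus\{x\})$ is an up-set of $U(n)$ contained in the image of $f_d$, hence of depth $\le d$, hence by the preliminary observation contained in $U(n)_{\le d}$ and in particular finite. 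Let $S_x$ be the (finite) antichain of minimal points of $T_x$; since $c$ is order-preserving on $M$ and $f_d$ is colour-preserving, $c(x)\le\min\{c(s) : s\in S_x\}$. I then set $f_{d+1}(x):=s$ when $S_x=\{s\}$ and $c(x)=c(s)$, and otherwise take $f_{d+1}(x)$ to be the unique point $x^\ast$ of $U(n)$ with colour $c(x)$ and ${\uparrow}x^\ast=\{x^\ast\}\cup T_x$, which exists by clause~(2) of Theorem~\ref{thm:unmodconst} (when $S_x$ is a singleton, noting that then $c(x)<c(s)$) or clause~(3) (when $|S_x|\ge 2$); on $M_{\le d}$ we let $f_{d+1}$ agree with $f_d$. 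One then checks directly that $f_{d+1}$ is order- and colour-preserving and satisfies the back condition — the back condition at a point $x$ of depth $d+1$ holds because ${\uparrow}f_{d+1}(x)$ equals either $T_x$ or $\{f_{d+1}(x)\}\cup T_x$, and $T_x=f_{d+1}({\uparrow}x\setminus\{x\})$ — and that its image still has depth $\le d+1$. Taking $d=m$ yields the required p-morphism $f=f_m : M\to U(n)$ with image of depth $\le m$.

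For uniqueness, let $g : M\to U(n)$ be any p-morphism. Since p-morphisms of models preserve the truth of all formulas, and since $U(n)$ is a generated submodel of the canonical model $C(n)$, so that the truth lemma applies to it, for every $x\in M$ the point $g(x)$, viewed as a prime filter of $F_{HA}(n)$, equals $\{\varphi\in F_{HA}(n) : U(n),g(x)\models\varphi\}=\{\varphi : M,x\models\varphi\}$, which depends only on $M$ and $x$. Hence any two p-morphisms $M\to U(n)$ coincide, so $f$ is unique.

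I expect the main obstacle to be exactly the finiteness of $T_x$, that is, ensuring that the antichain $S_x$ below each newly introduced point is finite so that clause~(3) of Theorem~\ref{thm:unmodconst} applies: this is where the finite-depth hypothesis on $M$ is genuinely needed, and it is dealt with by the preliminary observation that finite-depth generated submodels of $U(n)$ are finite. The remaining verifications (order-preservation, the colour condition, the back condition, and the depth bound on the image) are local and follow the shape of the definition of $f_{d+1}$, but should be carried out carefully, in particular distinguishing the case in which a point of $M$ is absorbed into an already existing point of $U(n)$ from the case in which a genuinely new point is used.
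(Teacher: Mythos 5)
Your proof is correct, and its existence part is essentially the paper's argument: a top-down construction of $f$ that assigns to each point $x$ the unique point of $U(n)$ determined (via clauses (2) and (3) of Theorem~\ref{thm:unmodconst}) by $c(x)$ and the image of the strict upward cone of $x$, with the same case split between absorbing $x$ into an existing point and creating a new one. You organize the induction by depth levels $M_{\le d}$ rather than by recursion over the generated submodels ${\uparrow}y$ for $y>x$, which is a cosmetic difference; more substantively, you make explicit the finiteness of $U(n)_{\le d}$, which the paper uses silently when it asserts that the set $S$ of images ``has finitely many minimal points,'' so your preliminary observation is a genuine (and welcome) completion of a step the paper glosses over. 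Where you truly diverge is uniqueness: the paper folds it into the induction and declares it ``straightforward to check,'' whereas you derive it from the truth lemma, identifying $g(x)$ with the prime filter $\{\varphi \mid M,x\models\varphi\}$ via the inclusion $U(n)\subseteq C(n)$. This buys a cleaner and more conceptual uniqueness proof that does not even need the finite-depth hypothesis, at the cost of invoking the canonical-model machinery of Subsection~\ref{subsec:canunivmod} rather than staying purely combinatorial; both routes are sound.
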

\begin{proof}\footnote{This fact is well-known, cf. e.g. \cite[p. 428]{ChaZak1997}. We briefly recall the proof here. Also cf., e.g., \cite[Thm. 3.2.3]{FanYang}, for more details. Note, however, that we do not assume here that $M$ is finite, only that $M$ has finite depth.}
We prove the statement by induction on $m$. 
First let $M$ be a model of depth $0$. In this case, there is clearly a unique p-morphism from $M$ to $U(n)$, namely the one which sends each point in $M$ to the unique maximal point in $U(n)$ of the same colour. Now let $M$ be a model of depth $m + 1$, for $m \geq 0$.
Let $x \in M$ be arbitrary; we will define $f(x) \in U(n)$. Note that, for every $y > x$, the submodel $M_y := {\uparrow} y$ generated by $y$ has depth $\leq m$. Thus, for each $y > x$, let $f_y : M_y \to U(n)$ be the unique p-morphism; the image of $f_y$ has depth $\leq m$ by the induction hypothesis. Therefore, the set $S := \bigcup_{y > x}\mathrm{im}(f_y)$ has depth $\leq m$ in $U(n)$. If $S$ is empty, then $x$ is maximal, and we define $f(x)$ to be the unique maximal point of $U(n)$ that has the same colour as $x$. Otherwise, $S$ has finitely many minimal points, $s_0, \dots, s_k$, say. Pick points $y_0, \dots, y_k$ in $M$ such that $s_i \in \mathrm{im}(f_{y_i})$. If $k = 0$ and $c(y_0) = c(x)$, then we define $f(x) := s_0$. Otherwise, by Theorem~\ref{thm:unmodconst}, there is a unique point $s$ in $U(n)$ whose immediate successors are $s_0, \dots, s_k$ such that $c(s) = c(x)$; we define $f(x) := s$. It is straightforward to check that $f$ defined in this manner is the unique p-morphism from $M$ to $U(n)$, and clearly the image of $f$ has depth $\leq m + 1$.\qed
\end{proof}
\begin{remark}\label{rem:bisimilar}
Two points $x$ and $x'$ in a model $M$ of finite depth are bisimilar if, and only if, the unique p-morphism $f$ in Proposition~\ref{prop:findepmap} sends them to the same point of $U(n)$.
\end{remark}

\begin{definition}[De Jongh formulas]\label{def:Jongh}
We define formulas $\varphi_w$, $\psi_w$ and $\theta_w$, for each $w \in U(n)$, by induction on the depth of $w$.
Let $w \in U(n)$. Let $I_w$ denote the (finite) set of immediate successors of $w$.
By recursion, we assume that the formulas $\varphi_{w'}$, $\psi_{w'}$ and $\theta_{w'}$ have been defined for each $w' \in I_w$. We define:
\begin{align}
\theta_{w} &:= \bigvee_{w' \in I_w} \varphi_{w'},\\
\varphi_w &:= \bigwedge_{p \in T_w} p \wedge \bigwedge_{q \in B_w} (q \to \theta_w) \wedge \bigwedge_{w' \in I_w} \left(\psi_{w'} \to \theta_w\right), \label{deJF}\\
\psi_w &:= \varphi_w \to \theta_w,\label{deJF1}
\end{align}
where $T_w$ is the set of propositional variables $p$ which are true in $w$, $B_w$ is the set of propositional variables $q$ such that $w$ is a $q$-border point.\footnote{We use the usual convention that $\bigvee \emptyset = \bot$ and $\bigwedge \emptyset = \top$.}
\end{definition}
Note that the above definition includes the case where $w$ is a maximal point, i.e., $k = 0$. Also note that the syntactic shape of our definition of $\varphi_w$ is slightly different from the usual definition (e.g. \cite[Def. 3.3.1]{NBezh}), but easily seen to be equivalent using the fact that $(\bigvee_{i=1}^m \alpha_i) \to \beta$ is equivalent in IPC to $\bigwedge_{i=1}^m (\alpha_i \to \beta)$, for any formulas $\alpha_1,\dots,\alpha_m$ and $\beta$.
The following theorem shows which subsets of the universal model are defined by De Jongh formulas.
\begin{theorem}\label{thm: Jongh}
For each $w\in U(n)$, we have  $v(\theta_w) = ({\uparrow}w) \setminus \{w\}$, $v(\varphi_w) = {\uparrow}w$,
and $v(\psi_w) = U(n)\setminus {\downarrow}w$.
\end{theorem}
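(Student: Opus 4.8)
The plan is to prove the three identities simultaneously by induction on the depth of $w$, exploiting the fact (Lemma~\ref{lem:univinjective} together with the truth lemma) that everything can be checked pointwise in the universal model $U(n)$, and that the recursive shape of the de Jongh formulas mirrors exactly the top-down construction of $U(n)$ in Theorem~\ref{thm:unmodconst}. For the base case, $w$ is a maximal point of $U(n)$: then $I_w = \emptyset$, so $\theta_w = \bot$ and $v(\theta_w) = \emptyset = ({\uparrow}w)\setminus\{w\}$; also $B_w$ consists of exactly those $q$ with $q \notin T_w$ (since $w$ is maximal, $w$ is a $q$-border point precisely when $w \not\models q$), so $\varphi_w = \bigwedge_{p\in T_w} p \wedge \bigwedge_{q\notin T_w}(q \to \bot)$, which holds at a point $x$ iff $x$ sees only points with colour $c(w)$ and $x \models p$ for $p \in T_w$, i.e.\ iff $x \in {\uparrow}w = \{w\}$; and $v(\psi_w) = v(\varphi_w \to \theta_w) = v(\varphi_w) \to \emptyset = ({\downarrow}w)^c = U(n)\setminus{\downarrow}w$ using the formula~(\ref{eq:heytingupsets}) for implication of up-sets.

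For the inductive step, assume the three identities hold for every $w' \in I_w$. First I would establish $v(\theta_w) = ({\uparrow}w)\setminus\{w\}$: by definition $v(\theta_w) = \bigcup_{w'\in I_w} v(\varphi_{w'}) = \bigcup_{w'\in I_w} {\uparrow}w'$ by the induction hypothesis, and since $I_w$ is exactly the set of immediate successors of $w$ in $U(n)$, this union is precisely $({\uparrow}w)\setminus\{w\}$. Next, for $v(\varphi_w) = {\uparrow}w$: the inclusion ${\uparrow}w \subseteq v(\varphi_w)$ amounts to checking that $w$ itself satisfies each conjunct — $w \models p$ for $p\in T_w$ by definition of $T_w$; $w \models q \to \theta_w$ for $q \in B_w$ because a $q$-border point sees $q$ only at proper successors, which lie in $v(\theta_w)$; and $w \models \psi_{w'} \to \theta_w$ for each $w' \in I_w$ because at any $y \geq w$ either $y = w$, and $w \not\models \psi_{w'}$ (as $w' \in {\downarrow}$ of an immediate successor... more carefully, $w \in {\downarrow}w'$ so $w \notin v(\psi_{w'}) = U(n)\setminus{\downarrow}w'$), or $y \neq w$, i.e.\ $y \in v(\theta_w)$. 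For the reverse inclusion $v(\varphi_w)\subseteq{\uparrow}w$, suppose $x \models \varphi_w$; one argues that $x$ has the same colour structure and successor structure forcing $x \in {\uparrow}w$ — the conjuncts $\bigwedge_{p\in T_w}p$ pin down that $c(x) \supseteq T_w$, the $(q\to\theta_w)$-conjuncts force that above $x$, the variable $q \notin T_w$ can only become true inside $({\uparrow}w)\setminus\{w\}$, and the $(\psi_{w'}\to\theta_w)$-conjuncts ensure that if $x \notin {\uparrow}w$ then some immediate successor relationship is violated; combining these via the uniqueness clauses in Theorem~\ref{thm:unmodconst} (and Proposition~\ref{prop:findepmap}/Remark~\ref{rem:bisimilar} to identify bisimilar points) yields $x = w$ or $x$ lies strictly above $w$. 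Finally, $v(\psi_w) = v(\varphi_w \to \theta_w)$: by~(\ref{eq:heytingupsets}), a point $x$ is in this set iff every $y \geq x$ with $y \in v(\varphi_w) = {\uparrow}w$ satisfies $y \in v(\theta_w) = ({\uparrow}w)\setminus\{w\}$, i.e.\ iff $w \notin {\uparrow}x$, i.e.\ iff $x \notin {\downarrow}w$, which is exactly $U(n)\setminus{\downarrow}w$.

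The main obstacle is the reverse inclusion $v(\varphi_w) \subseteq {\uparrow}w$ in the inductive step: this is where the full force of the construction is needed, since one must argue that a point satisfying all the conjuncts of $\varphi_w$ cannot have ``too few'' or ``extra'' successors and cannot have the wrong colour — the conjunct $\bigwedge_{w' \in I_w}(\psi_{w'}\to\theta_w)$ is precisely what rules out a point $x$ whose upset properly contains $\{w\}\cup\bigcup_{w'\in I_w}{\uparrow}w'$ or is otherwise misaligned. I would handle this by taking $x \models \varphi_w$, considering the generated submodel ${\uparrow}x$, showing its set of immediate-successor-bisimulation-classes must coincide with $\{{\uparrow}w' : w'\in I_w\}$ and its colour with $c(w)$, and then invoking the uniqueness in clauses (2)--(3) of Theorem~\ref{thm:unmodconst} to conclude $x = w$ (when $x$ is at the ``same level'') or otherwise $x > w$. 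The remaining verifications are routine unwinding of the definition of satisfaction and of~(\ref{eq:heytingupsets}).
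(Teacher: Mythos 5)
Your proof follows the same route as the paper's, which simply proceeds ``by induction on the depth of $w$'' and refers to the literature for the details; your base case and the steps $v(\theta_w)=({\uparrow}w)\setminus\{w\}$, ${\uparrow}w\subseteq v(\varphi_w)$, and the computation of $v(\psi_w)$ via the up-set implication formula are carried out correctly. The one direction you rightly single out as the crux, $v(\varphi_w)\subseteq{\uparrow}w$, is left as a sketch (in particular, one still has to pass to points that are maximal among those satisfying $\varphi_w$ but not $\theta_w$ before the immediate-successor analysis applies, and to use the strict colour decrease in clause (2) of Theorem~\ref{thm:unmodconst} to exclude points strictly below $w$), but the ingredients you name --- the $B_w$-conjuncts pinning the colour, the $\psi_{w'}\to\theta_w$-conjuncts pinning the immediate successors, and the uniqueness clauses of Theorem~\ref{thm:unmodconst} --- are exactly those of the standard argument.
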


\begin{proof}
By induction on the depth of $w$, cf., e.g., \cite[Thm.\ 3.3.2]{NBezh}.\qed
\end{proof}

Note that the de Jongh formula $\psi_w$ has the following property: a frame $G$ refutes
$\psi_w$ iff there is a generated subframe of $G$ p-morphically mapped onto the subframe of $U(n)$ generated
by $w$. In this way, de Jongh formulas correspond to the so-called Jankov or splitting formulas,
see \cite[Sec.\ 3.3]{NBezh} for the details.  

\section{Separated points and the meet-implication fragment}\label{sec:sepmeetimp}

In this section we use a duality for Heyting algebras and $(\wedge, \to)$-homomorphisms for characterizing 
$n$-universal models of the $(\wedge, \to)$-fragment of IPC (Theorem~\ref{thm:diegostrong}) and for characterizing $(\wedge, \to)$-definable up-sets of 
$n$-universal models of IPC (Theorem~\ref{thm:characterization}). The main technical contribution is the characterization of the dual model of the 
$(\wedge, \to)$-subalgebra of a Heyting algebra generated by a finite set of generators (Theorem~\ref{thm:gensubdual}).
Our proofs rely on discrete duality and do not use topology.  
They can be extended to Priestley \cite{Pri1970} and Esakia \cite{Esak} dualities by adding topology, but we will not use this (explicitly) in this paper. 

In the study of the meet-implication fragment, the following notion of `separated point' in a model will be crucial.\footnote{This notion has it roots in \cite{RenHenJon}. Our `separated' points are precisely those points which are `not inductive and not full' in the terminology of \cite[Def. 5]{RenHenJon}.}
\begin{definition}\label{dfn:separated}
Let $M$ be a model. A point $x \in M$ is \emph{separated} if, and only if, there exists a propositional variable $q$ for which $x$ is a $q$-border point.
\end{definition}
The following easy lemma will be used frequently in what follows.
\begin{lemma}\label{lem:pressep}
Let $f : M \to N$ be a p-morphism between models. If $x$ is a separated point in $M$, then $f(x)$ is separated in $N$.
\end{lemma}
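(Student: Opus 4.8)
The plan is to unwind the two definitions and use the fact that $p$-morphisms interact well with both the order structure and the colouring. Recall that $x$ being separated in $M$ means there is a propositional variable $q$ such that $x$ is a $q$-border point; by Definition~\ref{def: borderP}(1)--(2) this says $M, x \not\models q$, while $M, y \models q$ for every proper successor $y > x$. I want to show the same holds for $f(x)$ in $N$ with the same variable $q$.

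First I would observe that, since $p$-morphisms preserve truth, $N, f(x) \not\models q$ follows immediately from $M, x \not\models q$. So it remains to check that every proper successor of $f(x)$ in $N$ satisfies $q$. Here I would take an arbitrary $z \in N$ with $z > f(x)$ and use the $p$-morphism (back) condition: since $f(x) \leq z$, there exists $x' \in M$ with $x' \geq x$ and $f(x') = z$. The point $x'$ must be a \emph{proper} successor of $x$: if $x' = x$ we would get $z = f(x) = f(x') $, contradicting $z > f(x)$. Hence $x' > x$, so $M, x' \models q$ by the border assumption, and then $N, z = f(x') \models q$ again because $p$-morphisms preserve truth. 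Since $z$ was arbitrary, $f(x)$ is a $q$-border point, i.e.\ $f(x)$ is separated in $N$.

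There is essentially no obstacle here; the only point requiring a modicum of care is ensuring the witness $x'$ produced by the $p$-morphism condition is strictly above $x$, which is immediate from $f$ being order-preserving together with $z \neq f(x)$. One should also note that the argument does not need $M$ or $N$ to have borders or to be image-finite — it is a purely local statement about a single point — and that the same variable $q$ works for $f(x)$ as for $x$.
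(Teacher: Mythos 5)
Your proof is correct and follows essentially the same argument as the paper: show $f(x)$ is a $q$-border point for the same variable $q$, using colour preservation for $f(x)\not\models q$ and the back condition of the p-morphism to lift proper successors of $f(x)$ to proper successors of $x$. Your explicit check that the lifted point $x'$ is \emph{strictly} above $x$ is a small detail the paper glosses over, but it is the same proof.
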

\begin{proof}
Let $x \in M$ be a separated point. Choose $q$ such that $x$ is a $q$-border point. We claim that $f(x)$ is a $q$-border point in $N$, and therefore separated. Indeed, $N, f(x) \not\models q$ since $f$ preserves colourings. Also, if $y' > f(x)$, then since $f$ is a p-morphism we may pick $x' > x$ such that $f(x') = y'$. Since $x' > x$, we have that $M, x' \models q$ since $x$ is a $q$-border point, so $q$ also holds in $y = f(x')$, since $f$ preserves colourings.\qed
\end{proof}
The following alternative characterization of separated points relates them to the $(\wedge,\to)$-fragment.

\begin{lemma}\label{lem: 8}
Let $x$ be a point in a model $M$. The following are equivalent:
\begin{enumerate}
\item the point $x$ is separated;
\item there exists a $(\wedge,\to)$-formula $\varphi$ such that $x$ is a $\varphi$-border point.
\end{enumerate}
\end{lemma}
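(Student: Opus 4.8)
The proof splits into the two implications of the biconditional.

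For the direction $(1) \Rightarrow (2)$, the plan is to observe that a single propositional variable $q$ is itself a $(\wedge,\to)$-formula. Hence if $x$ is separated, witnessed by $q$, then taking $\varphi := q$ immediately gives a $(\wedge,\to)$-formula for which $x$ is a $\varphi$-border point. This direction is essentially a tautology once one notices that variables count as $(\wedge,\to)$-formulas, so I expect no difficulty here.

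The substantive direction is $(2) \Rightarrow (1)$. Here the plan is to argue contrapositively: suppose $x$ is \emph{not} separated, i.e.\ $x$ is not a $q$-border point for any propositional variable $q$; I want to show $x$ cannot be a $\varphi$-border point for any $(\wedge,\to)$-formula $\varphi$. The idea is to analyse what "not a $q$-border point for any $q$" means locally around $x$: for each variable $q$, either $q$ already holds at $x$ (so $x \in v(q)$), or $q$ fails at $x$ but also fails at some proper successor of $x$ (so $x$ is not maximal in the complement of $v(q)$). Equivalently, every variable $q$ with $x \not\models q$ fails somewhere strictly above $x$. From this I want to deduce, by induction on the structure of the $(\wedge,\to)$-formula $\varphi$, that $x$ cannot be a border point of $v(\varphi)$: roughly, whenever $x \notin v(\varphi)$ there is a proper successor $y > x$ with $y \notin v(\varphi)$, so $x$ is never maximal in the complement of $v(\varphi)$. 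The base case (variables) is exactly the hypothesis; the case $\varphi = \varphi_1 \wedge \varphi_2$ follows because $x \notin v(\varphi_1 \wedge \varphi_2)$ means $x \notin v(\varphi_i)$ for some $i$, and the inductive hypothesis supplies a successor witnessing failure of $\varphi_i$, hence of the conjunction. The delicate case is $\varphi = \varphi_1 \to \varphi_2$: if $x \notin v(\varphi_1 \to \varphi_2)$ then by the semantics of implication in up-sets there is some $y \geq x$ with $y \in v(\varphi_1)$ and $y \notin v(\varphi_2)$; one must check this $y$ can be taken strictly above $x$, or else handle the case $y = x$ separately.

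The main obstacle I anticipate is precisely this implication case: if the only witness $y$ to $x \not\models \varphi_1 \to \varphi_2$ is $x$ itself, then $x \models \varphi_1$ and $x \not\models \varphi_2$, and I need a proper successor of $x$ refuting $\varphi_1 \to \varphi_2$. Since $x \not\models \varphi_2$, the inductive hypothesis gives a proper successor $z > x$ with $z \not\models \varphi_2$; but I also need $z \models \varphi_1$, which need not hold. The fix is to strengthen the induction hypothesis: instead of "$x$ is not a $\varphi$-border point", prove the sharper statement that \emph{no} point which satisfies the relevant local non-separation condition can be a $\varphi$-border point, and push the condition up along successors. Concretely, I expect the right inductive claim to be: if $M$ is a model with borders (Proposition~\ref{prop:borders}) and $x \not\models \varphi$ with $x$ non-separated, then there is a proper successor of $x$ refuting $\varphi$. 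For the $\to$-case with $y = x$: since $x \models \varphi_1$, every proper successor of $x$ also satisfies $\varphi_1$ (up-sets are upward closed), so taking the successor $z > x$ refuting $\varphi_2$ given by the inductive hypothesis, $z$ satisfies $\varphi_1$ but not $\varphi_2$, hence refutes $\varphi_1 \to \varphi_2$, as required. This resolves the obstacle cleanly, and the remaining verifications are routine.
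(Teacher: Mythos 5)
Your proposal is correct and follows essentially the same route as the paper: the contrapositive of $(2)\Rightarrow(1)$ by induction on the complexity of $\varphi$, with the key $\to$-case handled exactly as in the paper by noting that the refutation witness must be $x$ itself and that $v(\varphi_1)$ is an up-set, so the proper successor refuting $\varphi_2$ supplied by the induction hypothesis also refutes $\varphi_1\to\varphi_2$. The only remark worth making is that the ``model with borders'' hypothesis you contemplate adding to the inductive claim is never used in your own resolution and should be dropped, since the lemma is stated for arbitrary models.
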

\begin{proof}
It is clear that (1) implies (2). For (2) implies (1), we prove the contrapositive. Suppose that $x$ is not separated. We prove the negation of (2), i.e., $x$ is not a $\varphi$-border point for any $(\wedge,\to)$-formula $\varphi$, by induction on complexity of $\varphi$. For $\varphi$ a propositional variable, this is true by assumption. For $\varphi = \psi \wedge \chi$, note that $v(\varphi)^c = v(\psi)^c \cup v(\chi)^c$. From this equality, it follows that if $x$ were a $\varphi$-border point, it would already be either a $\psi$-border point or a $\chi$-border point, which contradicts the induction hypothesis. For $\varphi = \psi \to \chi$, suppose that $x$ is a $\varphi$-border point. We will prove that $x$ is also a $\chi$-border point, which again contradicts the induction hypothesis. By maximality of $x$, all $y > x$ satisfy $\psi \to \chi$. However, $x$ does not satisfy $\psi \to \chi$, so we must have that $x \in v(\psi) \cap v(\chi)^c$. Since $v(\psi)$ is an up-set, we conclude that, for all $y > x$, $y \in v(\psi)$, and therefore $y \in v(\chi)$. Hence, $x$ is a $\chi$-border point, as required.\qed
\end{proof}
For a model $M$, we denote by $M^s$ the submodel consisting of the separated points of $M$. That is, the order and colouring on $M^s$ are the restrictions of the corresponding structures on $M$. (Note that the model $M^s$ is a submodel, but almost never a {\it generated} submodel, i.e.\ an up-set, of $M$!) 
\begin{lemma}\label{lem:finiteheight}
Let $M$ be a model on $n$ variables. The submodel $M^s$ has finite depth $\leq n$.
\end{lemma}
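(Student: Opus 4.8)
The plan is to show that any strictly ascending chain of separated points in $M^s$ has length at most $n$, by exhibiting an injection from such a chain into the set of propositional variables $\{p_1,\dots,p_n\}$. The key observation is that a separated point $x$ is, by Definition~\ref{dfn:separated}, a $q$-border point for some variable $q$; I would like to associate to each point in the chain such a witnessing variable, and argue these witnesses are pairwise distinct.

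First I would take a chain $x_0 < x_1 < \dots < x_k$ of separated points in $M^s$. For each $i$, since $x_i$ is separated, pick a variable $q_i$ such that $x_i$ is a $q_i$-border point; in particular $x_i \not\models q_i$ while $y \models q_i$ for every $y > x_i$. The heart of the argument is then that the map $i \mapsto q_i$ is injective. Suppose $i < j$ and $q_i = q_j =: q$. Then $x_j > x_i$, so by the border-point property of $x_i$ we get $x_j \models q$. But $x_j$ is a $q$-border point, so $x_j \not\models q$ — a contradiction. Hence all the $q_i$ are distinct, so $k + 1 \leq n$, i.e.\ the chain has length $\leq n$. Since $M^s$ carries the restricted order, a chain in $M^s$ is a chain in $M$ consisting of separated points, so this bounds the depth of $M^s$ by $n$.

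I do not expect a serious obstacle here; the only point requiring a little care is the bookkeeping about what "depth $\leq n$" means (a chain of $n+1$ points has $n$ proper inequalities, so "depth" in the sense used earlier — maximal length of a chain — is really the number of points; one should state the bound consistently with Proposition~\ref{prop:findepmap} and the surrounding conventions, and the argument above gives at most $n$ points in any chain, hence depth $\leq n$). One should also note that the witnessing variable $q_i$ need not be unique, but any choice works, so no choice principle beyond picking one element from each nonempty finite set is needed. Everything else is immediate from the definition of border point.
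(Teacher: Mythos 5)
Your proof is correct and rests on the same counting idea as the paper's: separatedness yields, for each point of a chain, a witnessing variable that is false there but true at all proper successors, and this forces any chain in $M^s$ to have at most $n$ points. The only (cosmetic) difference is that the paper routes the count through the colouring, observing that $x < y$ in $M^s$ implies $c(x) < c(y)$ and then bounding chains of colours in $(2^n,\leq)$, whereas your direct injection $i \mapsto q_i$ into $\{p_1,\dots,p_n\}$ arrives at the bound of $n$ points without needing to invoke the structure of $2^n$ at all.
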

\begin{proof}
Let $C$ be a chain in $M^s$. For any $x, y \in M^s$, if $x < y$, then $c(x) < c(y)$, since $x$ is separated and $y > x$ in $M$. Therefore, $\{c(x) \ | \ x \in C\}$ is a chain in the poset $(2^n, \leq)$, so that it must have size $\leq n$. Hence, $C$ has size at most $n$. \qed
\end{proof}
\begin{definition}[The model $M_{\wedge,\to}$]\label{def:13}
Let $M$ be a model and $M^s$ its submodel of separated points. Let $f : M^s \to U(n)$ be the unique p-morphism which exists by Lemma~\ref{lem:finiteheight} and Proposition~\ref{prop:findepmap}. Define $M_{\wedge,\to} := \mathrm{im}(f)$ to be the generated submodel of $U(n)$ consisting of those points in the image of $f$, as in the following diagram. 
\begin{equation}
\label{eq:Mconstruction}
\begin{tikzpicture}[node distance=1.9cm, auto, baseline=(current  bounding  box.center)]
  \node (Ms) {$M \supseteq M^s$};
  \node (Mwt) [right of=Ms] {$M_{\wedge,\to}$};
  \node (Un) [above of=Mwt] {$U(n)$};

  \draw[->] (Ms) to node[left,yshift=2mm] {$f$} (Un);
  \draw[->>] (Ms) to (Mwt);
  \draw[right hook->] (Mwt) to (Un);
\end{tikzpicture}
\end{equation}
\end{definition}
The above definition can in particular be applied to $U(n)$ itself. The following proposition characterizes the points in the generated submodel $U(n)_{\wedge,\to}$ of $U(n)$.
\begin{proposition}\label{prop:pointsUnwt}
Let $n \geq 1$. The generated submodel $U(n)_{\wedge,\to}$ consists exactly of those points $x \in U(n)$ such that for all $y \geq x$, $y$ is separated.
\end{proposition}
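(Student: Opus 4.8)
The plan is to prove the two inclusions between $U(n)_{\wedge,\to} = \mathrm{im}(f)$ and the set $S := \{x \in U(n) \ | \ y \text{ is separated for all } y \geq x\}$, where $f : U(n)^s \to U(n)$ is the unique p-morphism of Definition~\ref{def:13}. The first preparatory observation is that $S$ is an up-set of $U(n)$ and is in fact the largest up-set of $U(n)$ contained (as a subset) in $U(n)^s$: writing $S = \{x \ | \ {\uparrow}x \subseteq U(n)^s\}$ makes it evident that $S$ is upward closed and contained in $U(n)^s$, and any up-set $T \subseteq U(n)^s$ satisfies ${\uparrow}x \subseteq T \subseteq U(n)^s$ for each $x \in T$, hence $T \subseteq S$.

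For $\mathrm{im}(f) \subseteq S$, I would argue as follows. The image of any p-morphism is an up-set of its codomain (if $f(x) \leq z$, the back-condition gives some $x' \geq x$ with $f(x') = z$), so $\mathrm{im}(f)$ is an up-set of $U(n)$. Since every point of $U(n)^s$ is separated and separatedness is preserved by p-morphisms (Lemma~\ref{lem:pressep}), every point of $\mathrm{im}(f)$ is separated, i.e.\ $\mathrm{im}(f) \subseteq U(n)^s$. By the maximality property of $S$ recorded above, $\mathrm{im}(f) \subseteq S$.

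For $S \subseteq \mathrm{im}(f)$, it suffices to show $f(x) = x$ for every $x \in S$. Fix such an $x$. Then ${\uparrow}x$ (taken in $U(n)$) is contained in $U(n)^s$, and being upward closed it is a generated submodel of $U(n)$ and, simultaneously, a generated submodel of $U(n)^s$. It has finite depth (being a subset of $U(n)^s$, which has depth $\leq n$ by Lemma~\ref{lem:finiteheight}; or simply because $U(n)$ is image-finite). Now both the inclusion ${\uparrow}x \hookrightarrow U(n)$ and the restriction $f|_{{\uparrow}x} : {\uparrow}x \to U(n)$ are p-morphisms — the latter because a p-morphism restricted to a generated submodel of its domain is again a p-morphism — so by the uniqueness clause of Proposition~\ref{prop:findepmap} they coincide; in particular $f(x) = x$.

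Putting the inclusions together gives $U(n)_{\wedge,\to} = \mathrm{im}(f) = S$. I expect the only genuinely load-bearing step to be the identity $f(x) = x$ for $x \in S$: its content is that on the up-set ${\uparrow}x$, which is a generated submodel of both $U(n)$ and $U(n)^s$, the canonical p-morphism into $U(n)$ is pinned down by uniqueness to be the inclusion. Everything else is routine manipulation of up-sets together with the fact that p-morphisms send separated points to separated points.
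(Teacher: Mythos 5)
Your proof is correct and follows essentially the same route as the paper: one inclusion via Lemma~\ref{lem:pressep} (the image of $f$ consists of separated points and is an up-set), the other via the uniqueness clause of Proposition~\ref{prop:findepmap} forcing $f$ to agree with the inclusion on $S$. The only cosmetic difference is that you run the uniqueness argument on each principal up-set ${\uparrow}x$ separately, whereas the paper applies it to all of $S$ at once.
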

\begin{proof}
Write $S := \{x \in U(n) \ | \ \text{ for all } y \in U(n), \text{ if } y \geq x, \text{ then } y \text{ is separated}\}$.
Note that $S$ is a generated submodel of $U(n)$, and also of $U(n)^s$.
Let $f$ be as in Definition~\ref{def:13}. By definition, $U(n)_{\wedge,\to} = \mathrm{im}(f)$. 
We show that $S = \mathrm{im}(f)$.
If $x' \in U(n)^s$, then $f(x') \in S$: for any $y \in U(n)$ with $y \geq x$, there exists $y' \in U(n)^s$ such that $f(y') = y$. By Lemma~\ref{lem:pressep}, $y$ is separated. 
For the converse, note that the restriction, $g$, of $f$ to the generated submodel $S$ is still a p-morphism, since $S$ is a generated submodel of $U(n)^s$.
Also, the inclusion map $i : S \to U(n)$ is a p-morphism, since $S$ is a generated submodel of $U(n)$. 
Therefore, by the uniqueness part of Proposition~\ref{prop:findepmap}, we must have $i = g$.
Thus, if $x$ is in $S$, then $x = i(x) = g(x) = f(x)$. In particular, $x$ is in $\mathrm{im}(f)$.\qed
\end{proof}
\begin{lemma}\label{lem:Mwt-sep}
For any model $M$, $M_{\wedge,\to}$ is contained in $U(n)_{\wedge,\to}$. In particular, $M_{\wedge,\to}$ is a generated submodel of $U(n)$ of depth $\leq n$, and thereby a finite model.
\end{lemma}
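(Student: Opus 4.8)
The plan is to prove the containment $M_{\wedge,\to}\subseteq U(n)_{\wedge,\to}$ directly from Proposition~\ref{prop:pointsUnwt}, using Lemma~\ref{lem:pressep} to propagate separatedness along the p-morphism $f$ of Definition~\ref{def:13}, and then to read off the remaining three assertions from the construction of $M_{\wedge,\to}$ together with Proposition~\ref{prop:findepmap} and Theorem~\ref{thm:unmodconst}.

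The first observation I would make is that every point of the model $M^s$ is separated \emph{in $M^s$}, and not merely in $M$: if $x'\in M^s$ and $q$ is a propositional variable witnessing that $x'$ is a $q$-border point in $M$, then, since the proper successors of $x'$ in $M^s$ form a subset of its proper successors in $M$ and the colouring of $M^s$ is the restriction of that of $M$, the point $x'$ is still a $q$-border point in $M^s$. Hence, applying Lemma~\ref{lem:pressep} to the p-morphism $f\colon M^s\to U(n)$, every point of $M_{\wedge,\to}=\mathrm{im}(f)$ is separated in $U(n)$. Since the image of a p-morphism is an up-set, $M_{\wedge,\to}$ is a generated submodel of $U(n)$, so every point of $U(n)$ lying above a point of $M_{\wedge,\to}$ again belongs to $M_{\wedge,\to}$ and is therefore separated. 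Thus each point of $M_{\wedge,\to}$ satisfies the defining condition in Proposition~\ref{prop:pointsUnwt}, which yields $M_{\wedge,\to}\subseteq U(n)_{\wedge,\to}$.

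For the ``in particular'' clause, $M_{\wedge,\to}$ is a generated submodel of $U(n)$ by its very construction; by Lemma~\ref{lem:finiteheight} the model $M^s$ has depth $\leq n$, so the ``moreover'' part of Proposition~\ref{prop:findepmap} shows that $M_{\wedge,\to}=\mathrm{im}(f)$ has depth $\leq n$. Finiteness then follows once we know that the set of points of $U(n)$ of depth $\leq n$ is finite, and for this I would argue by induction on depth using Theorem~\ref{thm:unmodconst}: there are exactly $2^n$ maximal points, and every non-maximal point is uniquely determined, by the uniqueness clauses (2) and (3) of that theorem, by its colour in $2^n$ together with the (finite, as $U(n)$ is image-finite) antichain of its immediate successors; since each such immediate successor of a point of depth $\leq k+1$ has depth $\leq k$, the induction hypothesis bounds the number of points of depth $\leq k+1$. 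As $M_{\wedge,\to}$ is contained in this finite set, it is finite.

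The only subtle point in the argument is the very first one: when invoking Lemma~\ref{lem:pressep} one must read ``separated'' relative to the submodel $M^s$ rather than relative to $M$, which is legitimate precisely because passing to a submodel only removes successors, so a $q$-border point stays a $q$-border point. Everything else is routine bookkeeping with the universal property of $U(n)$ and the inductive description of $U(n)$ in Theorem~\ref{thm:unmodconst}.
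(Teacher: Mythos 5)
Your proof is correct and takes essentially the same route as the paper, which argues in one line that $M_{\wedge,\to}$ is a generated submodel all of whose points are separated and then invokes Proposition~\ref{prop:pointsUnwt} and Lemma~\ref{lem:finiteheight}. You merely fill in the details the paper leaves implicit (that points of $M^s$ are separated \emph{in} $M^s$ so that Lemma~\ref{lem:pressep} applies, and that $U(n)$ has only finitely many points of bounded depth), and these elaborations are accurate.
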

\begin{proof}
Since $M_{\wedge,\to}$ is the image of a p-morphism, it is a generated submodel, and all its points are separated, so by Proposition~\ref{prop:pointsUnwt}, every point of $M_{\wedge,\to}$ is in $U(n)_{\wedge,\to}$.
The `in particular'-part follows from Lemma~\ref{lem:finiteheight}.\qed
\end{proof}

In Theorem~\ref{thm:gensubdual} below, we will show that, for any model with borders $M$, the model $M_{\wedge,\to}$ is dual to the $(\wedge,\to)$-subalgebra of $A$ that is generated by the admissible up-sets $v(p_1),\dots,v(p_n)$. We need two lemmas, Lemma~\ref{lem:frob} and Lemma~\ref{lem:key}.
\begin{lemma}\label{lem:frob}
Let $f : H \to K$ be a function between Heyting algebras with an upper adjoint $g : K \to H$. Then $f$ preserves binary meets if, and only if, for all $a \in H$, $b \in K$, the equality $a \to g(b) = g(f(a) \to b)$ holds.
In particular, if $f$ is surjective and preserves binary meets, then $g$ preserves Heyting implication.
\end{lemma}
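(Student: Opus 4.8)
The plan is to unpack both conditions using the adjunction $f \dashv g$ together with the fact (Proposition~\ref{prop:adjbasicfacts}(3)) that Heyting implication is itself an upper adjoint, i.e. $a \to c$ is the maximum of $\{h : h \wedge a \le c\}$ in $H$, $x \to b$ is the maximum of $\{y : y \wedge x \le b\}$ in $K$, and $g(b)$ is the maximum of $\{h : f(h) \le b\}$. For the implication ``$f$ preserves binary meets $\Rightarrow$ the equality holds'', fix $a \in H$ and $b \in K$, and compute $g(f(a)\to b)$ via Proposition~\ref{prop:adjbasicfacts}(3) as the maximum of $\{h \in H : f(h) \le f(a)\to b\}$. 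The membership condition rewrites successively as $f(h)\wedge f(a) \le b$ (Heyting adjunction in $K$), then $f(h\wedge a)\le b$ (the only use of meet-preservation of $f$), then $h \wedge a \le g(b)$ (adjunction $f\dashv g$), then $h \le a \to g(b)$ (Heyting adjunction in $H$). Hence the set in question is $\{h \in H : h \le a\to g(b)\}$, whose maximum is $a\to g(b)$, so $g(f(a)\to b) = a \to g(b)$.

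For the converse, note first that $f$, having an upper adjoint, is monotone (Proposition~\ref{prop:adjbasicfacts}(2)), so $f(a_1\wedge a_2)\le f(a_1)\wedge f(a_2)$ always holds and only the reverse inequality is at issue. By the Heyting adjunction in $K$ followed by $f\dashv g$, the inequality $f(a_1)\wedge f(a_2)\le f(a_1\wedge a_2)$ is equivalent to $a_1 \le g(f(a_2)\to f(a_1\wedge a_2))$. Applying the hypothesised equality with $a := a_2$ and $b := f(a_1\wedge a_2)$ (legitimate, since it is quantified over all $b\in K$) turns the right-hand side into $a_2 \to g(f(a_1\wedge a_2))$, so the goal becomes $a_1\wedge a_2 \le g(f(a_1\wedge a_2))$, which is exactly the unit of $f\dashv g$ evaluated at $a_1\wedge a_2$. (The same bookkeeping shows, more symmetrically, that for fixed $a$ the statement ``$a\to g(b) = g(f(a)\to b)$ for all $b\in K$'' unpacks via the three adjunctions to ``$f(h\wedge a) = f(h)\wedge f(a)$ for all $h\in H$'', which gives both directions at once.)

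For the ``in particular'' clause: if $f$ is surjective, then $fg=\id_K$ by Proposition~\ref{prop:adjbasicfacts}(1). Given $b_1,b_2\in K$, instantiate the equality just proved at $a := g(b_1)$ and $b := b_2$; since $f(g(b_1))=b_1$, this reads $g(b_1)\to g(b_2) = g(b_1\to b_2)$, i.e. $g$ preserves Heyting implication. I do not anticipate a real obstacle; the only care needed is to track which of the three adjunctions is being invoked at each rewriting step, and to check that the instantiations used ($b := f(a_1\wedge a_2)$ in the converse, $a := g(b_1)$ in the last part) are permitted by the corresponding universal quantifiers.
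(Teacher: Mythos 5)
Your proof is correct and is, at bottom, the same argument as the paper's: the paper states both directions at once by observing that the square built from $a \wedge -$ and $f(a)\wedge -$ commutes if and only if the square of their upper adjoints (built from $a \to -$, $f(a)\to -$ and $g$) commutes, by uniqueness of adjoints, and your chains of equivalences through the three adjunctions are exactly the pointwise unwinding of that uniqueness argument (as your own closing parenthetical remark notes). The \emph{in particular} clause is handled identically in both proofs, via $fg=\id_K$ from Proposition~\ref{prop:adjbasicfacts}(1).
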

\begin{proof}
Let $a \in H$ be arbitrary. Consider the following two diagrams.
\begin{equation}
\label{eq:meetpres}
\begin{tikzpicture}[node distance=2cm, auto, baseline=(current  bounding  box.center)]
  \node (H1) {$H$};
  \node (H2) [below of=H1] {$H$};
  \node (K1) [right of=H1] {$K$};
  \node (K2) [right of=H2] {$K$};
  \draw[->] (H1) to node {$f$} (K1);
  \draw[->] (H1) to node[left] {$a \wedge -$} (H2);
  \draw[->] (H2) to node {$f$} (K2);
  \draw[->] (K1) to node {$f(a) \wedge -$} (K2);
\end{tikzpicture}
\hspace{10mm}
\begin{tikzpicture}[node distance=2cm, auto, baseline=(current  bounding  box.center)]
  \node (H1) {$H$};
  \node (H2) [below of=H1] {$H$};
  \node (K1) [right of=H1] {$K$};
  \node (K2) [right of=H2] {$K$};
  \draw[->] (K1) to node[above] {$g$} (H1);
  \draw[->] (H2) to node[left] {$a \to -$} (H1);
  \draw[->] (K2) to node[above] {$g$} (H2);
  \draw[->] (K2) to node[right] {$f(a) \to -$} (K1);
\end{tikzpicture}
\end{equation}
A way to express the assertion that $f$ preserves binary meets is that, for all $a \in H$, the left diagram in (\ref{eq:meetpres}) commutes. By uniqueness of adjoints, the left diagram in (\ref{eq:meetpres}) commutes if, and only if, the right diagram in (\ref{eq:meetpres}) commutes.

The `in particular'-part now follows since, if $f$ is surjective, then $b' = fg(b')$ for any $b' \in K$ (Proposition~\ref{prop:adjbasicfacts}).\qed
\end{proof}
Lemma~\ref{lem:frob} and its proof are very similar to, and were in fact directly inspired by, the {\it Frobenius condition} in \cite[Def. p. 157]{Pit1983} and the remark following it; we leave further exploration of the precise connection to future research.

The following lemma now provides the key connection between the construction of $M^s$ and the $(\wedge,\to)$-fragment.
\begin{lemma}\label{lem:key}
Let $M$ be a model with borders. 
Consider the following diagram:
\begin{equation}\label{eq:keylemdiagram}
\begin{tikzpicture}[node distance=2cm, auto, baseline=(current  bounding  box.center)]
  \node (FHA) {$F_{HA}(n)$};
  \node (Fwt) [left of=FHA] {$F_{\wedge,\to}(n)$};
  \node (M) [below of=FHA] {$\U(M)$};
  \node (Ms) [right of=M] {$\U(M^s)$};
  \draw[->] (FHA) to node {$v$} (M);
  \draw[->] (FHA) to node {$v^s$} (Ms);
  \draw[transform canvas={yshift=.5ex}, left hook->] (Ms) to node[above] {$r$} 
(M);
    \draw[transform canvas={yshift=-.5ex}, ->>] (M) to node[below] {$q$} (Ms);
    \draw[right hook->] (Fwt) to node[above]  {$i$} (FHA);
\end{tikzpicture}
\end{equation}
where $i$ is the natural inclusion, $v$ and $v^s$ are the valuation maps of $M$ and $M^s$, respectively, $q$ is the lattice homomorphism dual to the inclusion $M^s \hookrightarrow M$, and $r$ is its upper adjoint. Then $r$ is a $(\wedge,\to)$-homomorphism, and
\begin{equation}\label{eq:toprove2}
r \circ v^s \circ i = v \circ i.
\end{equation}
\end{lemma}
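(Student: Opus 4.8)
The plan is to establish the two assertions in turn. The first will be a formal consequence of the adjunction facts in Proposition~\ref{prop:adjbasicfacts} together with the Frobenius-type Lemma~\ref{lem:frob}; the second will follow by checking the identity on generators, using the freeness of $F_{\wedge,\to}(n)$ and, crucially, the assumption that $M$ has borders.

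For the first assertion: since $q$ is dual to the order-preserving inclusion $M^s \hookrightarrow M$, it sends an up-set $U$ of $M$ to $U \cap M^s$, so it is a lattice homomorphism; in particular it preserves binary meets, and it preserves arbitrary unions, hence it has an upper adjoint, namely $r$ (Proposition~\ref{prop:adjbasicfacts}). Being an upper adjoint between complete lattices, $r$ preserves all meets, in particular finite ones. Next, $q$ is surjective: for $V \in \U(M^s)$, the up-closure ${\uparrow}V$ of $V$ computed in $M$ satisfies $q({\uparrow}V) = V$, because $V$ is an up-set of $M^s$. Thus $q$ is a surjective, binary-meet-preserving map between Heyting algebras, so by the ``in particular'' part of Lemma~\ref{lem:frob} (with $f = q$ and $g = r$), $r$ preserves Heyting implication. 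Hence $r$ is a $(\wedge,\to)$-homomorphism.

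For the second assertion, note first that $r \circ v^s \circ i$ and $v \circ i$ are both $(\wedge,\to)$-homomorphisms from $F_{\wedge,\to}(n)$ to $\U(M)$: the former because $v^s$ is a Heyting homomorphism, $i$ a $(\wedge,\to)$-homomorphism, and $r$ a $(\wedge,\to)$-homomorphism by the first part; the latter because $v$ is a Heyting homomorphism and $i$ a $(\wedge,\to)$-homomorphism. Since $F_{\wedge,\to}(n)$ is freely generated as an implicative meet-semilattice by $p_1,\dots,p_n$, it suffices to show the two maps agree on each $p_k$. Write $V_k := v(p_k) = \{x \in M : x \models p_k\}$. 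As $v$ and $v^s$ restrict on $F_{DL}(n)$ to the colouring duals $c_M^*$ and $c_{M^s}^* = q \circ c_M^*$ respectively, we have $v^s(p_k) = q(V_k) = V_k \cap M^s$. By Proposition~\ref{prop:adjbasicfacts}(3), $r(V_k \cap M^s)$ is the largest up-set $U$ of $M$ with $U \cap M^s \subseteq V_k$, which one computes to be $\{x \in M : {\uparrow}x \cap M^s \subseteq V_k\}$. The inclusion $V_k \subseteq r(V_k \cap M^s)$ is immediate since $V_k$ is an up-set. For the reverse inclusion, suppose $x \not\models p_k$; since $M$ has borders there is a $p_k$-border point $u \ge x$, and such a $u$ is by definition separated, so $u \in M^s$, while $u \notin v(p_k) = V_k$. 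Hence ${\uparrow}x \cap M^s \not\subseteq V_k$, so $x \notin r(V_k \cap M^s)$. Therefore $r(V_k \cap M^s) = V_k = v(p_k)$, and the two homomorphisms agree on all generators, which gives $r \circ v^s \circ i = v \circ i$.

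The main obstacle, and the only place the borders hypothesis is used, is the reverse inclusion in the last step: one must ensure that every point of $M$ at which $p_k$ fails has a \emph{separated} point above it at which $p_k$ still fails. This is exactly what the existence of a $p_k$-border point above it provides, since $p_k$-border points are separated and lie outside $v(p_k)$. Everything else is routine bookkeeping with adjunctions, Lemma~\ref{lem:frob}, and the universal property of the free implicative meet-semilattice.
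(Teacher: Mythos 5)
Your proof is correct and follows essentially the same route as the paper's: both parts use the fact that $r$ preserves meets as an upper adjoint and preserves $\to$ via the Frobenius condition of Lemma~\ref{lem:frob}, then reduce the identity (\ref{eq:toprove2}) to the generators by freeness, where the inclusion $v(p)\subseteq rqv(p)$ is the unit of the adjunction and the reverse inclusion is exactly the borders argument. The only difference is that you spell out the surjectivity and meet-preservation of $q$ and the explicit formula for $r$, which the paper leaves implicit.
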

\begin{remark}
Note that, by Proposition~\ref{prop:adjbasicfacts}, the function $r$ sends an up-set $V$ of $M^s$ to the up-set $\{x \in M \ | \ \forall y \geq x\,(y \in M^s \Rightarrow y \in V)\}$ of $M$. Therefore, the equality (\ref{eq:toprove2}) says precisely that, for any $(\wedge,\to)$-formula $\varphi$ and $x \in M$, we have
\begin{equation}\label{eq:toprove}
M, x \models \varphi \iff \forall y \geq x\,(y \in M^s \Rightarrow M^s, y \models \varphi).
\end{equation}
In this sense, Lemma~\ref{lem:key} is an algebraic rendering of the crucial ingredient to \cite[Proof of Thm. 1]{RenHenJon}. The proof we give here is different in spirit.
\end{remark}

\begin{proof}[of Lemma~\ref{lem:key}]
Note that $r$ is $\wedge$-preserving since it is an upper adjoint, and $r$ is $\to$-preserving by Lemma~\ref{lem:frob}. Therefore, both $v \circ i$ and $r \circ v^s \circ i$ are $(\wedge,\to)$-homomorphisms. 
Hence, to prove (\ref{eq:toprove2}), it suffices to prove that $v \circ i$ and $r \circ v^s \circ i$ are equal on propositional variables. 
Let $p$ be any propositional variable. We have that $vi(p) \leq rqvi(p) = rv^si(p)$, because $r$ is upper adjoint to $q$ and $v^s(p) = qv(p)$ by definition of $v^s$.
On the other hand, suppose that $x \not\in vi(p)$. Since $M$ is a model with borders, pick $y \in \max(v(p)^c)$ such that $y \geq x$. Then $y \in M^s$, so $x \not\in rqvi(p)$, as required.\qed
\end{proof}

\begin{proposition}\label{cor: 17}
Let $M$ be a model with borders and let $M^s$ and $f:M^s\to M_{\wedge, \to}$ be as in Definition~\ref{def:13}. For any $w\in M^s$ and $(\wedge, \to)$-formula $\varphi$, we have 
\begin{equation}
M, w\models \varphi \iff M_{\wedge, \to}, f(w)\models \varphi.
\end{equation}
\end{proposition}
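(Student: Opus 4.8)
The plan is to deduce this proposition by chaining together the two facts we already have available: Lemma~\ref{lem:key}, which relates truth in $M$ to truth in $M^s$ for $(\wedge,\to)$-formulas, and the basic fact that p-morphisms preserve truth of arbitrary formulas (in particular $(\wedge,\to)$-formulas). First I would recall that, by Lemma~\ref{lem:key} (in the form of the equivalence~(\ref{eq:toprove})), for any $(\wedge,\to)$-formula $\varphi$ and any $x \in M$,
\[
M, x \models \varphi \iff \forall y \geq x\ (y \in M^s \Rightarrow M^s, y \models \varphi).
\]
Now specialize to the case $x = w$ with $w \in M^s$. Since $M^s$ is a submodel (with the induced order), $w$ itself is one of the points $y \geq w$ with $y \in M^s$; hence the right-hand side immediately gives $M, w \models \varphi \Rightarrow M^s, w \models \varphi$. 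Conversely, if $M, w \not\models \varphi$, there is some $y \geq w$ with $y \in M^s$ and $M^s, y \not\models \varphi$; but this does not directly contradict $M^s, w \models \varphi$ unless $M^s$-truth is upward closed along $\geq$. It is: $M^s$ is a genuine submodel of $M$ with the induced order and colouring, so $v^s(\varphi)$ is an up-set of $M^s$, and $y \geq w$ in $M^s$ with $M^s, w \models \varphi$ would force $M^s, y \models \varphi$. Thus in fact, for $w \in M^s$,
\[
M, w \models \varphi \iff M^s, w \models \varphi.
\]

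Next I would invoke the fact, recorded in Subsection~\ref{subsec:semanticsduality}, that p-morphisms of models preserve truth: since $f : M^s \to M_{\wedge,\to}$ is a (surjective) p-morphism, we have $M^s, w \models \varphi$ if and only if $M_{\wedge,\to}, f(w) \models \varphi$, for every formula $\varphi$ and in particular for every $(\wedge,\to)$-formula. Combining the two displayed equivalences yields
\[
M, w \models \varphi \iff M^s, w \models \varphi \iff M_{\wedge,\to}, f(w) \models \varphi,
\]
which is exactly the statement of the proposition.

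I do not expect a serious obstacle here; the proposition is essentially a bookkeeping corollary of Lemma~\ref{lem:key} and Definition~\ref{def:13}. The one point that needs a line of care is the passage from the ``$\forall y \geq x$'' formulation in~(\ref{eq:toprove}) to the clean biconditional $M, w \models \varphi \iff M^s, w \models \varphi$ when $w$ already lies in $M^s$: this relies on $v^s(\varphi)$ being an up-set of $M^s$, i.e.\ on $M^s$ being treated as a model in its own right with the restricted order, which is precisely how $M^s$ was defined. Everything else is an application of results stated earlier in the paper.
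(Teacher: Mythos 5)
Your proposal is correct and follows exactly the route the paper intends: the paper's own proof simply reads ``Immediate from Definition~\ref{def:13} and the equivalence in (\ref{eq:toprove})'', and your two steps (specializing (\ref{eq:toprove}) to $w \in M^s$ using that $v^s(\varphi)$ is an up-set of $M^s$, then transporting along the p-morphism $f$ onto $M_{\wedge,\to}$) are precisely the details being left implicit. No issues.
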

\begin{proof}
Immediate from Definition~\ref{def:13} and the equivalence in (\ref{eq:toprove}).\qed
\end{proof}

The above considerations in particular allow us to prove the following theorem, originally due to Diego \cite{Diego}.
\begin{theorem}[Diego]\label{thm:diegoweak}
For any $n$, $F_{\wedge,\to}(n)$ embeds as a $(\wedge,\to)$-subalgebra into $\U(U(n)_{\wedge,\to})$.
In particular, $F_{\wedge,\to}(n)$ is finite and therefore, the variety of implicative meet-semilattices is locally finite. 
\end{theorem}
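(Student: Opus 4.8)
The plan is to construct an explicit $(\wedge,\to)$-homomorphism $h : F_{\wedge,\to}(n) \to \U(U(n)_{\wedge,\to})$ and show it is injective; since $U(n)_{\wedge,\to}$ is a finite model by Lemma~\ref{lem:Mwt-sep}, the codomain is then a finite algebra, and the two ``in particular'' clauses follow immediately. Concretely, I would take $h := \rho \circ v \circ i$, where $i : F_{\wedge,\to}(n) \hookrightarrow F_{HA}(n)$ is the natural inclusion, $v : F_{HA}(n) \to \U(U(n))$ is the valuation map of the universal model, and $\rho : \U(U(n)) \to \U(U(n)_{\wedge,\to})$, $V \mapsto V \cap U(n)_{\wedge,\to}$, is the Heyting homomorphism dual to the inclusion $p$-morphism $U(n)_{\wedge,\to} \hookrightarrow U(n)$ (a $p$-morphism because $U(n)_{\wedge,\to}$ is a generated submodel, Lemma~\ref{lem:Mwt-sep}). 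Each of $i$, $v$, $\rho$ is a $(\wedge,\to)$-homomorphism, so $h$ is one. Moreover $i$ is injective (being the inclusion of free algebras, i.e.\ conservativity of the $(\wedge,\to)$-fragment) and $v$ is injective by Lemma~\ref{lem:univinjective}, so the whole problem reduces to showing that $\rho$ is injective when restricted to the image of $v \circ i$.

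The key idea there is to exhibit a one-sided inverse of $\rho$ on $\mathrm{im}(v\circ i)$ by means of Lemma~\ref{lem:key}. Applying Lemma~\ref{lem:key} to $M = U(n)$ --- which has borders since it is image-finite (Proposition~\ref{prop:borders}(1)) --- and writing $v^s$ for the valuation of $U(n)^s$ and $r$ for the upper adjoint of the restriction $\U(U(n)) \twoheadrightarrow \U(U(n)^s)$, equation~(\ref{eq:toprove2}) gives $v \circ i = r \circ v^s \circ i$. Next, since the map $f : U(n)^s \to U(n)_{\wedge,\to}$ of Definition~\ref{def:13} is a $p$-morphism of models (hence truth-preserving) and $U(n)_{\wedge,\to}$ is a generated submodel of $U(n)$, one checks that $v^s(i\varphi) = f^{-1}(\rho(v(i\varphi)))$ for every $(\wedge,\to)$-formula $\varphi$; this is just Proposition~\ref{cor: 17} rephrased in terms of up-sets. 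Combining the two identities yields $v(i\varphi) = r(f^{-1}(\rho(v(i\varphi))))$, so $r$ composed with the dual of $f$ is a left inverse of $\rho$ on $\mathrm{im}(v\circ i)$. Hence $h = \rho\circ v\circ i$ is injective, and composing with the inverse of the isomorphism of $v\circ i$ onto its image exhibits $F_{\wedge,\to}(n)$ as a $(\wedge,\to)$-subalgebra of $\U(U(n)_{\wedge,\to})$.

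Finally, $U(n)_{\wedge,\to}$ has depth $\leq n$ and is finite (Lemma~\ref{lem:Mwt-sep}), so $\U(U(n)_{\wedge,\to})$ is a finite implicative meet-semilattice, whence $F_{\wedge,\to}(n)$ is finite; and any finitely generated implicative meet-semilattice is a homomorphic image of $F_{\wedge,\to}(m)$ for a suitable $m$, hence finite, so the variety is locally finite. The step I expect to be most delicate is the identity $v^s(i\varphi) = f^{-1}(\rho(v(i\varphi)))$: because $U(n)^s$ is \emph{not} a generated submodel of $U(n)$, one cannot naively restrict valuations and must instead route through the border-point ``reconstruction'' of $v(i\varphi)$ from its trace on $U(n)^s$ recorded in~(\ref{eq:toprove}), being careful that the direction of the adjunction $q \dashv r$ and the surjectivity of $f$ are used consistently.
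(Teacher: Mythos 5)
Your proof is correct and takes essentially the same route as the paper: your $h = \rho \circ v \circ i$ coincides (by freeness) with the paper's extension of $p_i \mapsto p_i$, and your left-inverse argument via Lemma~\ref{lem:key} applied to $U(n)$ and the p-morphism $f$ is just the diagram-chase form of the paper's pointwise argument, which picks a separated point distinguishing $\varphi$ from $\psi$ and pushes it forward along $f$. The only nitpick is that the identity $v^s(i\varphi) = f^{-1}(\rho(v(i\varphi)))$ is plain truth-preservation under the p-morphism $f : U(n)^s \to U(n)_{\wedge,\to}$ rather than Proposition~\ref{cor: 17}, whose left-hand side is satisfaction in $U(n)$, not in $U(n)^s$.
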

\begin{proof} 
Let $h : F_{\wedge,\to}(n) \to \U(U(n)_{\wedge,\to})$ be the extension of the assignment $p_i \mapsto p_i$ to a $(\wedge,\to)$-homomorphism. We show that $h$ is injective. Suppose that $\varphi \neq \psi$ in $F_{\wedge,\to}(n)$. By Lemma~\ref{lem:univinjective}, we have $v^{U(n)}(i(\varphi)) \neq v^{U(n)}(i(\psi))$. By Lemma~\ref{lem:key}, applied to $U(n)$, we have $v^s(i(\varphi)) \neq v^s(i(\psi))$, since $r$ is injective. This means that there exists $x \in U(n)^s$ such that $U(n)^s, x \models \varphi$ and $U(n)^s, x \not\models \psi$. Hence, since $f : U(n)^s \to U(n)$ is a p-morphism, we obtain $f(x) \in h(\varphi)$ and $f(x) \not\in h(\psi)$, so $h(\varphi) \neq h(\psi)$, as required.
The `in particular'-part now follows, since by Lemma~\ref{lem:Mwt-sep}, $U(n)_{\wedge,\to}$ is finite.\qed
\end{proof}

It follows from Theorem~\ref{thm:diegoweak} that 
$F_{\wedge,\to}(n)$ is a finite Heyting algebra for each $n$, in which the binary supremum is given by
\begin{equation}
 \varphi \veebar \psi = \bigwedge \{ \chi \in F_{\wedge,\to}(n) \ | \ \varphi \leq \chi \text{ and } \psi \leq \chi \},
\end{equation}
and the bottom element $\underline{\bot}$ is given by $p_1 \wedge \cdots \wedge p_n$.
\begin{definition}\label{def:s}
For each intuitionistic formula $\varphi$, let $s(\varphi)$ denote the formula obtained from $\varphi$ by replacing each occurrence of a disjunction $\vee$ by $\veebar$, and replacing each occurrence of $\bot$ by $\underline{\bot}$.
\end{definition}
Algebraically, the above definition is the unique Heyting algebra homomorphism $s : F_{HA}(n) \to F_{\wedge,\to}(n)$ extending the assignment $p_i \mapsto p_i$. 
This means that if $\varphi$ is provable in IPC, $s(\varphi)$ is also provable in IPC.
In particular, if 
$\varphi$ implies $\psi$ in IPC, then $s(\varphi\to \psi) = s(\varphi)\to s(\psi)$  is also provable in IPC. This means that  $s(\varphi)$ implies $s(\psi)$. 

\begin{theorem}\label{thm:definable}
Every up-set of $U(n)_{\wedge,\to}$ is definable by a $(\wedge,\to)$-formula.
\end{theorem}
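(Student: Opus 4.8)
The plan is to reduce the statement in two steps and then to produce the required formulas by translating the de Jongh formulas through the substitution $s$ of Definition~\ref{def:s}.

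\emph{Two reductions.} Since $U(n)_{\wedge,\to}$ is finite (Lemma~\ref{lem:Mwt-sep}), every up-set $W$ of it equals $\bigcap_{w\notin W}\bigl(U(n)_{\wedge,\to}\setminus{\downarrow}w\bigr)$ (with the empty intersection taken to be the whole model); as $\wedge$ is available in the fragment, it suffices to $(\wedge,\to)$-define each ``meet-irreducible'' $U(n)_{\wedge,\to}\setminus{\downarrow}w$. Now fix $w\in U(n)_{\wedge,\to}$. By Proposition~\ref{prop:pointsUnwt}, $w$ is separated, so some variable $q$ makes $w$ a $q$-border point; then $w\notin v(q)$ but ${\uparrow}w\setminus\{w\}\subseteq v(q)$, so $({\uparrow}w)\cap v(q)^{c}=\{w\}$ inside $U(n)_{\wedge,\to}$. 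Hence if $\mu_w$ is a $(\wedge,\to)$-formula with $v(\mu_w)\cap U(n)_{\wedge,\to}={\uparrow}w$, then for $x\in U(n)_{\wedge,\to}$ the formula $\mu_w\to q$ fails at $x$ exactly when some $y\geq x$ lies in $({\uparrow}w)\cap v(q)^{c}=\{w\}$, i.e.\ exactly when $x\leq w$; so $\mu_w\to q$ defines $U(n)_{\wedge,\to}\setminus{\downarrow}w$. Everything thus comes down to $(\wedge,\to)$-defining each principal up-set ${\uparrow}w$.

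\emph{Defining ${\uparrow}w$.} Take $\mu_w:=s(\varphi_w)$, and likewise $\tau_w:=s(\theta_w)$ and $\nu_w:=s(\psi_w)$, for the de Jongh formulas of Definition~\ref{def:Jongh}. Since $s$ is a homomorphism, $\tau_w$ is the iterated $\veebar$ of the $\mu_{w'}$ ($w'\in I_w$), equal to $\underline{\bot}$ when $w$ is maximal; $\mu_w=\bigwedge_{p\in T_w}p\wedge\bigwedge_{q\in B_w}(q\to\tau_w)\wedge\bigwedge_{w'\in I_w}(\nu_{w'}\to\tau_w)$; and $\nu_w=\mu_w\to\tau_w$. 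I would prove, by induction on the depth of $w$ (at most $n$ by Lemma~\ref{lem:finiteheight}), that inside $U(n)_{\wedge,\to}$: (a) $w\notin v(\tau_w)$ and $v(\tau_w)\cap{\uparrow}w={\uparrow}w\setminus\{w\}$; (b) $v(\mu_w)\cap U(n)_{\wedge,\to}={\uparrow}w$; (c) $v(\nu_w)\cap U(n)_{\wedge,\to}=U(n)_{\wedge,\to}\setminus{\downarrow}w$. Point (a) is the heart: by the induction hypothesis each $v(\mu_{w'})\cap U(n)_{\wedge,\to}$ equals ${\uparrow}w'$, so $v(\tau_w)$ is the \emph{least} $(\wedge,\to)$-definable up-set containing all the ${\uparrow}w'$; hence it contains $\bigcup_{w'\in I_w}{\uparrow}w'={\uparrow}w\setminus\{w\}$, and it is contained in the $(\wedge,\to)$-definable up-set $v(q)$ — which already contains every ${\uparrow}w'$ (as $w'>w$ and $q$ is a border variable of $w$) and misses $w$. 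Given (a), claim (c) follows from (b) exactly as in the reduction above (with $\tau_w$ in place of $q$, using $v(\tau_w)\cap{\uparrow}w={\uparrow}w\setminus\{w\}$), and the inclusion $\supseteq$ in (b) is a routine check that each $x\geq w$ satisfies every conjunct of $\mu_w$: using (a) for the conjuncts $q\to\tau_w$, and the induction hypothesis (c) for $w'$ (so that $w\not\models\nu_{w'}$, since $w\leq w'$) for the conjuncts $\nu_{w'}\to\tau_w$.

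\emph{The main obstacle} is the reverse inclusion $\subseteq$ in (b): that $x\in U(n)_{\wedge,\to}$ with $x\models s(\varphi_w)$ must satisfy $x\geq w$. This is the analogue, for the translated formulas and for the model $U(n)_{\wedge,\to}$, of the hard half of the correctness of the de Jongh formulas (Theorem~\ref{thm: Jongh}); it is delicate precisely because $v(\tau_w)$ may strictly exceed ${\uparrow}w\setminus\{w\}$ away from ${\uparrow}w$, so the classical argument — which from $x\models\varphi_w$ builds a p-morphism of ${\uparrow}x$ onto the submodel of $U(n)_{\wedge,\to}$ generated by $w$ — has to be re-run using only statements (a)--(c) as information about the translated formulas, together with the structural features of $U(n)_{\wedge,\to}$ (every point separated; colours strictly increasing along $\leq$, cf.\ Lemma~\ref{lem:finiteheight}). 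This is where the bulk of the work lies. Once (c) is in hand, the first reduction finishes the proof: an arbitrary up-set $W$ of $U(n)_{\wedge,\to}$ is defined by the $(\wedge,\to)$-formula $\bigwedge_{w\notin W}\nu_w$.
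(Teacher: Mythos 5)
Your reductions are sound and you have chosen the right formulas (the $s$-translates of the de Jongh formulas), but the proposal has a genuine gap exactly where you say ``the bulk of the work lies'': the inclusion $v(\mu_w)\cap U(n)_{\wedge,\to}\subseteq{\uparrow}w$ is never established, and the induction scheme you set up cannot establish it. The difficulty you flag is real: with only your statement (a) you know $w\notin v(\tau_w)$ and $v(\tau_w)\cap{\uparrow}w={\uparrow}w\setminus\{w\}$, but the conjuncts $q\to\tau_w$ and $\nu_{w'}\to\tau_w$ of $\mu_w$ get \emph{weaker} the larger $v(\tau_w)$ is off ${\uparrow}w$, so to refute $\mu_w$ at a point $x\not\geq w$ you need to know $v(\tau_w)$ exactly, i.e.\ that $v(\tau_w)\cap U(n)_{\wedge,\to}={\uparrow}I_w$ with no excess anywhere in the model. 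That is a statement about the non-principal up-set ${\uparrow}I_w$, which your induction on the depth of $w$ over principal up-sets does not cover; and your description of $v(\tau_w)$ as the least $(\wedge,\to)$-definable up-set containing $\bigcup_{w'\in I_w}{\uparrow}w'$ only collapses to ${\uparrow}I_w$ once you already know that ${\uparrow}I_w$ is $(\wedge,\to)$-definable --- an instance of the very theorem being proved. As it stands the argument is circular at its central step.

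The paper breaks this circularity by inducting on \emph{all} up-sets of $U(n)_{\wedge,\to}$ ordered by inclusion, rather than on the depth of principal generators. For a principal $U={\uparrow}w$, the induction hypothesis applies to the proper subsets ${\uparrow}I_w$ and ${\uparrow}I_{w'}$, whence $s(\theta_w)$ and each $s(\psi_{w'})$ define exactly the same subsets of $U(n)_{\wedge,\to}$ as $\theta_w$ and $\psi_{w'}$; since the outer connectives of $\varphi_w$ are $\wedge$ and $\to$, the formulas $s(\varphi_w)$ and $\varphi_w$ then define the same subset, and Theorem~\ref{thm: Jongh} is invoked directly --- the hard half of the de Jongh correctness argument is never re-run. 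The genuinely new content is the case of a non-principal $U$, where one shows that $s(\varphi_U)$ fails at every border point $u$ of $U$: when ${\uparrow}I_u=U$ this is precisely where separatedness enters (choose $q$ with $u$ a $q$-border point; every minimal point of $U$ satisfies $q$, so $\varphi_U\vdash q$, hence $s(\varphi_U)\vdash q$, but $u\not\models q$), and when ${\uparrow}I_u\subsetneq U$ the induction hypothesis applies to ${\uparrow}I_u$. To rescue your outline you would have to add exactly this non-principal case to your induction, at which point it becomes the paper's proof.
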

\begin{proof}
Let $U$ be an up-set of $U(n)_{\wedge,\to}$. Recall that $U(n)_{\wedge,\to}$ is a finite generated submodel of $U(n)$, by Lemma~\ref{lem:Mwt-sep}. We denote by $\min(U)$ the finite set of minimal points of $U$. It follows from Theorem~\ref{thm: Jongh} that $U$ is defined by the disjunction $\varphi_U := \bigvee_{u \in \min(U)} \varphi_u$ of de Jongh formulas. We also have the $(\wedge,\to)$-formula $s(\varphi_U)$ defined as in Definition~\ref{def:s}.
To prove the theorem, it therefore suffices to prove the following claim.

\vspace{2mm}

{\bf Claim.} The up-set of $U(n)_{\wedge,\to}$ defined by $s(\varphi_U)$ is equal to $U$. 

\vspace{2mm}

{\it Proof of Claim.} By induction on the partial order of inclusion of up-sets of $U(n)_{\wedge,\to}$. 
For the base case, $U = \emptyset$, note that $\min(\emptyset) = \emptyset$, so that $s(\varphi_\emptyset) = s(\bot) = p_1 \wedge \cdots \wedge p_n$, which indeed defines the empty subset of $U(n)_{\wedge,\to}$, since no separated point makes all propositional variables true.

Now suppose that $U$ is a non-empty up-set in $U(n)_{\wedge,\to}$. The induction hypothesis is that, for all proper subsets $V \subsetneq U$, the formula $s(\varphi_V)$ defines $V$. 

We distinguish two cases: (1) $U$ has a one minimal point; (2) $U$ has more than one minimal point.

\vspace{2mm}

(1) Let $w$ be the minimum of $U$. By the induction hypothesis, for every $w' \in I_w$, $s(\varphi_{w'})$ defines ${\uparrow} w'$ in $U(n)_{\wedge,\to}$. Therefore, the formula $s(\psi_{w'}) = s(\varphi_{w'}) \to s(\theta_{w'})$ defines $({\downarrow} w')^c$  in $U(n)_{\wedge,\to}$.  Thus, $s(\psi_{w'})$ and $\psi_{w'}$ define the same up-set in $U(n)_{\wedge,\to}$. Moreover, the induction hypothesis also implies that $s(\theta_{w}) = s(\varphi_{{\uparrow}I_w})$ defines ${\uparrow} I_{w}$  in $U(n)_{\wedge,\to}$. Thus, the formulas $s(\theta_w)$ and $\theta_w$ define the same subset of $U(n)_{\wedge,\to}$. It follows that a point $x \in U(n)_{\wedge,\to}$ satisfies $s(\varphi_w)$ if, and only if, $x$ satisfies $\varphi_w$. By Theorem~\ref{thm: Jongh}, the latter holds if, and only if, $x \geq w$. Thus, $s(\varphi_w)$ defines ${\uparrow} w = U$.

\vspace{2mm}

(2) Note first that, if $u \in U$, then $u \geq w$ for some $w \in \min(U)$. Therefore, $u \models s(\varphi_w)$, using case (1). Since $\varphi_w$ implies $\varphi_U$ in IPC, we have that $s(\varphi_w)$ implies $s(\varphi_U)$. Hence, $u \models s(\varphi_U)$.
It remains to show that there is no border point $u$ of $U$ which satisfies $s(\varphi_U)$. Let $u$ be a border point of $U$. We write $B$ for the up-set ${\uparrow} I_u$, which is a subset of $U$ since $u$ is a border point of $U$. We will distinguish two sub-cases: (a) $B = U$, and (b) $B\subsetneq U$. 

\vspace{2mm}

(a) $B = U$. Then, in particular, $I_u = \min(B) = \min(U)$.
Since $u$ is separated, choose a propositional variable $q$ so that $u$ is a $q$-border point. Then every point $w \in \min(U) = I_u$ satisfies $q$, so $\varphi_U$ implies $q$ in IPC, so $s(\varphi_U)$ implies $s(q) = q$ in IPC. Since $u$ does not satisfy $q$, $u$ also does not satisfy $s(\varphi_U)$.
\vspace{2mm}

(b) $B \subsetneq U$. 
Applying the induction hypothesis to $B$, we see that $s(\varphi_B) = \underline{\bigvee}_{u' \in I_u} s(\varphi_{u'})$ defines $B$. It follows from this that $u$ does not satisfy $s(\psi_u)$, since $u$ certainly satisfies $s(\varphi_u)$, using the induction hypothesis again.
An easy application of Theorem~\ref{thm: Jongh} shows that, for every $w \in \min(U)$, $\varphi_{w}$ implies $\psi_u$ in IPC, since $w \nleq u$. Hence, $s(\varphi_{w})$ implies $s(\psi_u)$, for every $w \in I_u$. Therefore, $s(\varphi_U)$ implies $s(\psi_u)$. However, $u$ does not satisfy $s(\psi_u)$, so $u$ does not satisfy $s(\varphi_U)$.
\qed
\end{proof}

Let $M$ be a model with borders.
In diagram (\ref{eq:thmdiagram}) below we show how the valuation of formulas in the models $M$ and $M^s$, as in diagram~(\ref{eq:keylemdiagram}), is related to the unique map $f : M^s \to U(n)$ that was used in the diagram (\ref{eq:Mconstruction}).
\begin{equation}\label{eq:thmdiagram}
\begin{tikzpicture}[node distance=2cm, auto, baseline=(current  bounding  box.center)]
  \node (FHA) {$F_{HA}(n)$};
  \node (Fwt) [left of=FHA] {$F_{\wedge,\to}(n)$};
  \node (M) [below of=FHA] {$\U(M)$};
  \node (Ms) [right of=M] {$\U(M^s)$};
  \node (UMwt) [right of = Ms] {$\U(M_{\wedge,\to})$};
  \node (UUn) [above of = UMwt] {$\U(U(n))$};
  \draw[->] (FHA) to node {$v$} (M);
  \draw[->] (FHA) to node {$v^s$} (Ms);
  \draw[transform canvas={yshift=.5ex}, left hook->] (Ms) to node[above] {$r$} 
(M);
    \draw[transform canvas={yshift=-.5ex}, ->>] (M) to node[below] {$q$} (Ms);
    \draw[right hook->] (Fwt) to node[above]  {$i$} (FHA);
    \draw [left hook->] (UMwt) to node[above] {$h$} (Ms);
    \draw[->] (UUn) to node[left] {$f^*$} (Ms);
    \draw[->] (FHA) to node[above] {$v^{U(n)}$} (UUn);
    \draw[->>] (UUn) to node[right] {$t$} (UMwt);
\end{tikzpicture}
\end{equation}
In the above diagram, the left part of the diagram is defined as in (\ref{eq:keylemdiagram}), $v^{U(n)}$ denotes the natural valuation on $U(n)$, and the triangle $f^* = h \circ t$ is the dual of the triangle in (\ref{eq:Mconstruction}).

\begin{theorem}\label{thm:gensubdual}
Let $M$ be a model with borders. Denote by $B$ be the $(\wedge,\to)$-subalgebra of $\mathcal{U}(M)$ that is generated by $v(p_1), \dots, v(p_n)$. Then $B$ is equal to the image of the composite $rh$. In particular, $B$ is isomorphic to the implicative meet-semilattice $\U(M_{\wedge,\to})$.
\end{theorem}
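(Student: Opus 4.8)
The plan is to establish the two inclusions $B \subseteq \mathrm{im}(rh)$ and $\mathrm{im}(rh) \subseteq B$ separately, having first checked that $rh$ is an \emph{injective} $(\wedge,\to)$-homomorphism; the ``in particular'' clause is then immediate, since an injective $(\wedge,\to)$-homomorphism is an isomorphism onto its image and $\U(M_{\wedge,\to})$ is a finite distributive lattice, hence a Heyting algebra, hence an implicative meet-semilattice. That $rh$ is an injective $(\wedge,\to)$-homomorphism I would see as follows. The map $h$ is the dual of the surjective p-morphism obtained by corestricting $f$ to its image $M_{\wedge,\to}$, hence an injective Heyting homomorphism, and in particular an injective $(\wedge,\to)$-homomorphism. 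The map $r$ preserves $\wedge$ and $\to$ by Lemma~\ref{lem:key}, and it is injective by Proposition~\ref{prop:adjbasicfacts}(1), since its lower adjoint $q$ (dual to the inclusion $M^s \hookrightarrow M$) is surjective. It is also convenient to record that, writing $i : F_{\wedge,\to}(n) \hookrightarrow F_{HA}(n)$ for the natural inclusion and $v$ for the valuation of $M$, the composite $v \circ i$ is the unique $(\wedge,\to)$-homomorphism $F_{\wedge,\to}(n) \to \U(M)$ sending each $p_k$ to $v(p_k)$, so that $B = \mathrm{im}(v \circ i)$.

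For the inclusion $B \subseteq \mathrm{im}(rh)$ I would use the commutativities assembled in diagram~(\ref{eq:thmdiagram}). Since $f : M^s \to U(n)$ is a p-morphism it preserves truth, so $v^s = f^* \circ v^{U(n)} = h \circ t \circ v^{U(n)}$. Precomposing with $i$ exhibits $v^s \circ i$ as factoring through $h$, and postcomposing with $r$ and invoking the identity $r \circ v^s \circ i = v \circ i$ of Lemma~\ref{lem:key} yields $v \circ i = r \circ h \circ (t \circ v^{U(n)} \circ i)$. Hence $B = \mathrm{im}(v \circ i) \subseteq \mathrm{im}(rh)$.

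For the reverse inclusion $\mathrm{im}(rh) \subseteq B$, let $V$ be an arbitrary up-set of $M_{\wedge,\to}$; I must show $rh(V) \in B$. By Lemma~\ref{lem:Mwt-sep}, $M_{\wedge,\to}$ is a generated submodel of $U(n)_{\wedge,\to}$, so $V$ is also an up-set of $U(n)_{\wedge,\to}$; by Theorem~\ref{thm:definable} there is a $(\wedge,\to)$-formula $\varphi$ defining $V$ in $U(n)_{\wedge,\to}$, and restricting the valuation along the generated-submodel inclusion shows that $\varphi$ already defines $V$ inside $M_{\wedge,\to}$. Since $f : M^s \twoheadrightarrow M_{\wedge,\to}$ is a p-morphism, $h(V) = f^{-1}(V) = v^s(i(\varphi))$, and then Lemma~\ref{lem:key} gives $rh(V) = r(v^s(i(\varphi))) = v(i(\varphi)) \in \mathrm{im}(v \circ i) = B$. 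Combining the two inclusions proves $B = \mathrm{im}(rh)$, and the ``in particular'' clause follows from the first paragraph.

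The substantive content thus reduces to inputs already available: Lemma~\ref{lem:key}, which supplies both the fact that $r$ is a $(\wedge,\to)$-homomorphism and the equation $r \circ v^s \circ i = v \circ i$, and Theorem~\ref{thm:definable}, which makes every relevant up-set $(\wedge,\to)$-definable. I expect the only step needing genuine care to be the transfer used in the second inclusion: checking that an up-set of the generated submodel $M_{\wedge,\to}$ is an up-set of the ambient model $U(n)_{\wedge,\to}$, and that a $(\wedge,\to)$-formula defining it there still defines it after restriction to $M_{\wedge,\to}$, so that Theorem~\ref{thm:definable} is legitimately applicable. Everything else is a diagram chase against the commutativities of (\ref{eq:thmdiagram}).
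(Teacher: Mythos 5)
Your proof is correct, and for the most part it follows the same route as the paper: both arguments hinge on the factorization $v i = r h\, t\, v^{U(n)} i$ obtained from Lemma~\ref{lem:key} together with $v^s = f^* v^{U(n)} = h t v^{U(n)}$, and both prove $\mathrm{im}(rh) \subseteq B$ by taking an up-set $V$ of $M_{\wedge,\to}$, noting via Lemma~\ref{lem:Mwt-sep} that it is an up-set of $U(n)_{\wedge,\to}$, invoking Theorem~\ref{thm:definable} to get a defining $(\wedge,\to)$-formula $\varphi$, and computing $rh(V) = vi(\varphi)$. The one genuine difference is in the inclusion $B \subseteq \mathrm{im}(rh)$: the paper proves it by showing that $\mathrm{im}(rh)$ is a $(\wedge,\to)$-subalgebra of $\U(M)$ containing the generators, which requires verifying the identity $rh(U) \to rh(V) = rh(U \to V)$ via Lemma~\ref{lem:frob} and $qr = \id$; you instead read the inclusion off directly from the factorization $vi = rh \circ (t v^{U(n)} i)$, since $B = \mathrm{im}(vi)$. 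Your route is slightly more economical here, as it reuses the same diagram chase for both inclusions, though the price is that you must separately establish that $rh$ is an injective $(\wedge,\to)$-homomorphism (which you do, correctly, from Lemma~\ref{lem:key} and Proposition~\ref{prop:adjbasicfacts}) in order to get the ``in particular'' clause, whereas the paper's closure argument delivers the homomorphism property of $rh$ as a byproduct. Your explicit attention to transferring definability from $U(n)_{\wedge,\to}$ down to the generated submodel $M_{\wedge,\to}$ is exactly the point the paper handles with the equation $t v^{U(n)} i(\varphi) = v^{U(n)} i(\varphi) \cap M_{\wedge,\to} = U$.
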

\begin{proof}
Chasing the diagram (\ref{eq:thmdiagram}), we have:
\begin{equation}\label{eq:diagramchase}
vi = rv^si = r f^* v^{U(n)} i = rhtv^{U(n)}i,
\end{equation}
where we use Lemma~\ref{lem:key} and the fact that $v^s = f^* v^{U(n)}$, since $f$ is a p-morphism of models. 
Note that $B = \mathrm{im}(vi)$, so we need to show that $\mathrm{im}(rh) = \mathrm{im}(vi)$.
For the inclusion ``$\subseteq$'', let $U \in \mathcal{U}(M_{\wedge,\to})$, and $V := rh(U)$; we prove that $V \in \mathrm{im}(vi)$. Since $U$ is an up-set in $U(n)_{\wedge,\to}$ by Lemma~\ref{lem:Mwt-sep}, Theorem~\ref{thm:definable} implies that there is a $(\wedge,\to)$-formula $\varphi$ such that $v^{U(n)}i(\varphi) \cap U(n)_{\wedge,\to} = U$. Therefore, since $U \subseteq M_{\wedge,\to}$, we have 
$tv^{U(n)}i(\varphi) = v^{U(n)}i(\varphi) \cap M_{\wedge, \to} = U$. Thus, $V = rh(U) = rhtv^{U(n)}i(\varphi) = vi(\varphi)$, using (\ref{eq:diagramchase}), so $V \in \mathrm{im}(vi)$.

For the inclusion ``$\supseteq$'', note first that $\mathrm{im}(r h)$ contains $v(p_1),\dots,v(p_n)$. It thus remains to show that $\mathrm{im}(r h)$ is a $(\wedge,\to)$-subalgebra of $\U(M)$, or equivalently, that $r h$ preserves $\wedge$ and $\to$. Since $r$ is an upper adjoint and $h$ is a Heyting homomorphism, $r h$ preserves $\wedge$. Moreover, using Lemma~\ref{lem:frob} and the fact that $qr = \id$, we have, for any $U, V \in \U(M_{\wedge,\to})$, that
\[ rh(U) \rightarrow rh(V) = r(qrh(U) \rightarrow h(V)) = r(h(U) \rightarrow h(V)) = rh(U \rightarrow V),\]
where the last step uses that $h$ is a Heyting homomorphism.\qed
\end{proof}

We now use this theorem to prove three facts about the $(\wedge,\to)$-fragment of IPC. The first is a strong form of Diego's theorem.

\begin{corollary}\label{thm:diegostrong}
For any $n$, $F_{\wedge,\to}(n) \cong \U(U(n)_{\wedge,\to})$.
\end{corollary}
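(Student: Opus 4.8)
The plan is to deduce Corollary~\ref{thm:diegostrong} by applying Theorem~\ref{thm:gensubdual} to a well-chosen model with borders, namely the universal model $U(n)$ itself. First I would note that $U(n)$ is image-finite, hence a model with borders by Proposition~\ref{prop:borders}(1), so the hypotheses of Theorem~\ref{thm:gensubdual} are met. Applying that theorem with $M = U(n)$ yields that the $(\wedge,\to)$-subalgebra $B$ of $\mathcal{U}(U(n))$ generated by $v^{U(n)}(p_1),\dots,v^{U(n)}(p_n)$ is isomorphic, as an implicative meet-semilattice, to $\mathcal{U}(U(n)_{\wedge,\to})$.

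Next I would identify $B$ with $F_{\wedge,\to}(n)$. The inclusion $i : F_{\wedge,\to}(n) \hookrightarrow F_{HA}(n)$ composed with the valuation $v^{U(n)} : F_{HA}(n) \to \mathcal{U}(U(n))$ is a $(\wedge,\to)$-homomorphism whose image is exactly $B$, since $B$ is by definition the $(\wedge,\to)$-subalgebra generated by the $v^{U(n)}(p_i)$ and every element of $F_{\wedge,\to}(n)$ is a $(\wedge,\to)$-term in the generators $p_i$. So it remains to check that $v^{U(n)} \circ i$ is injective: this is immediate from Lemma~\ref{lem:univinjective}, which says $v^{U(n)} : F_{HA}(n) \to \mathcal{U}(U(n))$ is injective, and $i$ is injective as a subalgebra inclusion. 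Hence $v^{U(n)} \circ i$ is an isomorphism from $F_{\wedge,\to}(n)$ onto $B$, and composing with the isomorphism $B \cong \mathcal{U}(U(n)_{\wedge,\to})$ from Theorem~\ref{thm:gensubdual} gives $F_{\wedge,\to}(n) \cong \mathcal{U}(U(n)_{\wedge,\to})$.

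There is essentially no hard part here — the corollary is a packaging of Theorem~\ref{thm:gensubdual} together with the injectivity statement of Lemma~\ref{lem:univinjective} — but the one point that deserves care is making sure the isomorphism one obtains is an isomorphism of implicative meet-semilattices and that it restricts correctly: Theorem~\ref{thm:gensubdual} already phrases its conclusion as ``$B$ is isomorphic to the implicative meet-semilattice $\mathcal{U}(U(n)_{\wedge,\to})$'', and by the diagram chase in its proof the isomorphism is realized by $rh$ (with $r$ an upper adjoint preserving $\wedge$ and, via Lemma~\ref{lem:frob}, $\to$), so the $(\wedge,\to)$-structure is genuinely preserved. Thus the only thing to verify independently is that the generating subalgebra $B$ coincides with the isomorphic copy of $F_{\wedge,\to}(n)$ sitting inside $\mathcal{U}(U(n))$, which is exactly the content of Theorem~\ref{thm:diegoweak} and its proof; indeed one could also phrase the argument as: Theorem~\ref{thm:diegoweak} gives an embedding $h : F_{\wedge,\to}(n) \hookrightarrow \mathcal{U}(U(n)_{\wedge,\to})$, and Theorem~\ref{thm:gensubdual} (applied to $U(n)$) shows this embedding is onto, since its image must be all of $\mathcal{U}(U(n)_{\wedge,\to})$.
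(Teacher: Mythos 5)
Your proposal is correct and follows essentially the same route as the paper's own proof: apply Theorem~\ref{thm:gensubdual} to the model $U(n)$, and use Lemma~\ref{lem:univinjective} to identify $F_{\wedge,\to}(n)$ with the image of $v^{U(n)}i$, i.e.\ the $(\wedge,\to)$-subalgebra of $\U(U(n))$ generated by $v(p_1),\dots,v(p_n)$. The extra checks you add (that $U(n)$ has borders via image-finiteness, and that the isomorphism respects the $(\wedge,\to)$-structure) are sound and merely make explicit what the paper leaves implicit.
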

\begin{proof}
Apply Theorem~\ref{thm:gensubdual} to the model $U(n)$. Using Lemma~\ref{lem:univinjective}, the map $v^{U(n)}i : F_{\wedge,\to}(n) \to \mathcal{U}(U(n))$ is injective, so $F_{\wedge,\to}(n)$ is isomorphic to the image of $v^{U(n)}i$. The image of $v^{U(n)} i$ is the subalgebra generated by $v(p_1),\dots,v(p_n)$, which, by Theorem~\ref{thm:gensubdual} is isomorphic to $\U(U(n))_{\wedge,\to}$.\qed
\end{proof}

\begin{theorem}\label{thm:sepform}
For any $\varphi \in F_{HA}(n)$
 and any model $M$ and $x \in M^s$, we have:
\[ M^s, x \models \varphi \iff M^s, x \models s(\varphi).\]
\end{theorem}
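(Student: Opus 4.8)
The plan is to prove the equivalence $M^s, x \models \varphi \iff M^s, x \models s(\varphi)$ by induction on the structure of the intuitionistic formula $\varphi$, exploiting that $s$ is a homomorphism which only changes the disjunction and falsum symbols. The base case of propositional variables is trivial, since $s(p_i) = p_i$. The cases $\varphi = \psi \wedge \chi$ and $\varphi = \psi \to \chi$ also go through essentially verbatim, because $s$ commutes with $\wedge$ and $\to$ and, crucially, the semantics of $\wedge$ and $\to$ at a point of the \emph{submodel} $M^s$ can only refer to successors within $M^s$ --- here one uses that $M^s$ is a submodel with the restricted order, so $\models$ is computed internally to $M^s$. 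So the only interesting cases are $\varphi = \bot$ and $\varphi = \psi \vee \chi$.

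For $\varphi = \bot$: we must show that for $x \in M^s$, $M^s, x \not\models \bot$ (which is automatic) agrees with $M^s, x \models s(\bot) = \underline{\bot} = p_1 \wedge \cdots \wedge p_n$; that is, we must show that no separated point of any model satisfies all propositional variables. This is exactly the observation already used in the base case of the Claim in the proof of Theorem~\ref{thm:definable}: if $x$ is separated, it is a $q$-border point for some variable $q$, so in particular $x \not\models q$. For $\varphi = \psi \vee \chi$: by the induction hypothesis, $\psi$ and $s(\psi)$ define the same subset of $M^s$, and likewise for $\chi$ and $s(\chi)$. So it suffices to show that $v^{M^s}(s(\psi) \veebar s(\chi)) = v^{M^s}(s(\psi)) \cup v^{M^s}(s(\chi))$ --- i.e. that on the model $M^s$, the \emph{Heyting-algebra join $\veebar$ of $F_{\wedge,\to}(n)$} is computed as genuine set-theoretic union. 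The point is that $\veebar$ is defined as the meet in $F_{\wedge,\to}(n)$ of all $(\wedge,\to)$-formulas above both disjuncts, and the valuation map $F_{\wedge,\to}(n) \to \U(M^s)$ factors through $F_{\wedge,\to}(n) \hookrightarrow \U(U(n)_{\wedge,\to})$ composed with restriction; so it suffices to check the statement on $U(n)_{\wedge,\to}$, or more precisely to check that the composite $F_{\wedge,\to}(n) \to \U(M^s)$, which is $v^s \circ i$ restricted appropriately, sends $\veebar$ to $\cup$.

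I would phrase this last point cleanly as follows. By Lemma~\ref{lem:key}, $v^s \circ i = q \circ v \circ i$, and $v \circ i : F_{\wedge,\to}(n) \to \U(M)$ has image the $(\wedge,\to)$-subalgebra $B$ generated by the $v(p_i)$. By Theorem~\ref{thm:gensubdual}, $B \cong \U(M_{\wedge,\to})$ with $M_{\wedge,\to}$ a \emph{generated submodel of $U(n)$} of depth $\le n$, and in particular a Heyting subalgebra of $\U(U(n))$ is \emph{not} what $B$ is --- but $\U(M_{\wedge,\to})$ \emph{is} a genuine Heyting algebra of up-sets, and the point of Theorem~\ref{thm:definable} together with Corollary~\ref{thm:diegostrong} is that the join $\veebar$ in $F_{\wedge,\to}(n) \cong \U(U(n)_{\wedge,\to})$ \emph{is} the set-theoretic union of up-sets. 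Concretely: for $\alpha, \beta \in F_{\wedge,\to}(n)$, the up-set of $U(n)_{\wedge,\to}$ defined by $\alpha \veebar \beta$ is, by definition of $\veebar$ and the fact that every up-set of $U(n)_{\wedge,\to}$ is $(\wedge,\to)$-definable (Theorem~\ref{thm:definable}), the smallest $(\wedge,\to)$-definable up-set containing those of $\alpha$ and $\beta$, which is their union since unions of up-sets of $U(n)_{\wedge,\to}$ are again $(\wedge,\to)$-definable. Pulling back along the p-morphism $M^s \to U(n)_{\wedge,\to}$ of Definition~\ref{def:13} (whose truth-preservation for $(\wedge,\to)$-formulas is Proposition~\ref{cor: 17}, valid since $M^s$ trivially has borders --- it is image-finite of depth $\le n$ by Lemma~\ref{lem:finiteheight}, or directly: $M^s$ has borders because the construction of borders takes place within $M^s$), this gives $v^{M^s}(s(\psi) \veebar s(\chi)) = v^{M^s}(s(\psi)) \cup v^{M^s}(s(\chi))$, completing the disjunction case.

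\textbf{Main obstacle.} The real subtlety --- and the step I'd be most careful about --- is the disjunction case, specifically making precise that ``$\veebar$ evaluated in $M^s$ is genuine union''. One must resist the temptation to argue that $\U(M^s)$ is a Heyting algebra with union as join (true) and that $F_{\wedge,\to}(n) \to \U(M^s)$ is a Heyting homomorphism (\emph{false} in general --- it is only a $(\wedge,\to)$-homomorphism, and $(\wedge,\to)$-homomorphisms need not preserve join). The correct route is through $U(n)_{\wedge,\to}$ and Theorem~\ref{thm:definable}: it is precisely there that one has enough definable up-sets to force $\veebar$ to coincide with $\cup$, and then one transports this down to $M^s$ via the p-morphism $f$ and Proposition~\ref{cor: 17}. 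A minor bookkeeping point to get right is that $s$ applied to a \emph{subformula} of $\varphi$ is the corresponding subformula of $s(\varphi)$, so that the induction is genuinely structural; this is immediate from $s$ being a homomorphism fixing the variables.
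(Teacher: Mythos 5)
Your proof is correct, but it takes a genuinely different route from the paper's. The paper argues in one shot, purely algebraically: it observes that $t v^{U(n)} i s$ is a \emph{surjective} $(\wedge,\to)$-homomorphism between Heyting algebras, hence join-preserving by Lemma~\ref{lem:surjheyt}, hence a Heyting homomorphism; composing with $h$ and invoking the uniqueness of the valuation map extending the colouring gives $h t v^{U(n)} i s = v^s$, from which the statement follows by unwinding $f^* = ht$. You instead do a structural induction on $\varphi$, reducing everything to the cases $\bot$ and $\vee$, and handle the $\vee$-case by showing that $\veebar$ is evaluated as genuine union on $U(n)_{\wedge,\to}$ (via Corollary~\ref{thm:diegostrong} and Theorem~\ref{thm:definable}) and then pulling this back along the p-morphism $M^s \to U(n)$. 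Both arguments ultimately rest on the same fact --- surjectivity of the valuation $F_{\wedge,\to}(n) \to \U(U(n)_{\wedge,\to})$, i.e.\ Theorem~\ref{thm:definable} --- but the paper channels it abstractly through Lemma~\ref{lem:surjheyt} and the universal property of $F_{HA}(n)$, whereas your $\vee$-case is in effect a hands-on unpacking of that lemma in this one instance. What the paper's route buys is brevity and no case analysis; what yours buys is transparency about exactly where definability of all up-sets of $U(n)_{\wedge,\to}$ and the ``no separated point satisfies all variables'' observation enter, and you correctly flag the pitfall that $v^s \circ i$ cannot simply be \emph{assumed} to preserve joins (indeed, that it does is essentially the content of the theorem). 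Two small inaccuracies that do not affect the argument: $M^s$ has finite depth $\leq n$ by Lemma~\ref{lem:finiteheight} but need not be image-finite (finite depth is enough both for Proposition~\ref{prop:findepmap} and for the existence of borders); and the appeal to Proposition~\ref{cor: 17} is misplaced, since that proposition compares $M$ with $M_{\wedge,\to}$ --- what your pullback step actually needs is only that $f : M^s \to U(n)$ is a p-morphism and therefore preserves the truth of all formulas, which requires no borders hypothesis at all.
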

\begin{proof}
Recall that $s$ is the unique Heyting homomorphism $F_{HA}(n) \to F_{\wedge,\to}(n)$ such that $s(p) = p$ for all propositional variables $p$. Note that $s i$ is the identity on $F_{\wedge,\to}(n)$, so $s$ is surjective. Also note that $tv^{U(n)}i$ is surjective, as we showed in the proof of the inclusion ``$\subseteq$'' of Theorem~\ref{thm:gensubdual}. We conclude that $tv^{U(n)}is$ is a surjective $(\wedge,\to)$-preserving map, and therefore it is a Heyting homomorphism by Lemma~\ref{lem:surjheyt}. Now, $htv^{U(n)}is$ is also a Heyting homomorphism and $htv^{U(n)}is(p) = htv^{U(n)}(p) = v^s(p)$. By uniqueness of the map $v^s$, we conclude that $htv^{U(n)}is = v^s$. Thus, for any $x \in M^s$, we have 
\[ x \in v^s(\varphi) \iff x \in htv^{U(n)}is(\varphi) \iff x \in f^*v^{U(n)}is(\varphi) \iff x \in v^sis(\varphi),\]
as required. \qed
\end{proof}

\begin{theorem}\label{thm:characterization}
Let $M$ be a model with borders.
Let $U \subseteq M$ be an up-set. The following are equivalent:
\begin{enumerate}
\item There exists a $(\wedge,\to)$-formula $\varphi$ such that $v(\varphi) = U$;
\item For all $x \in M$, if, for all $z \in M^s$ such that $z \geq x$, there exists $y \in U \cap M^s$ bisimilar to $z$ in $M^s$, then $x \in U$;
\item For all $x \in M$, 
\begin{itemize}
\item[$(a)$] if all separated points above $x$ are in $U$, then $x \in U$, and
\item[$(b)$] 
if $x \in M^s$ and there exists $x' \in U \cap M^s$ which is bisimilar to $x$ in $M^s$, then $x \in U$.
\end{itemize}
\end{enumerate}
\end{theorem}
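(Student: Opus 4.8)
The plan is to establish the cycle $(1)\Rightarrow(2)\Rightarrow(3)\Rightarrow(1)$, with almost all of the work concentrated in the last implication; throughout one should bear in mind that $M^s$ is not in general a generated submodel of $M$, so that any comparison of truth in $M$ with truth in $M^s$ must be routed through the equivalence~(\ref{eq:toprove}) (equivalently, Lemma~\ref{lem:key}), not through naive restriction. For $(1)\Rightarrow(2)$, assume $v(\varphi)=U$ for some $(\wedge,\to)$-formula $\varphi$, and let $x\in M$ satisfy the hypothesis of~(2). To prove $x\in U$, i.e.\ $M,x\models\varphi$, I invoke~(\ref{eq:toprove}): it is enough to show $M^s,z\models\varphi$ for every separated $z\geq x$. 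Given such a $z$, pick, by the hypothesis of~(2), a point $y\in U\cap M^s$ bisimilar to $z$ in $M^s$; since $y\in U=v(\varphi)$ we have $M,y\models\varphi$, applying~(\ref{eq:toprove}) to $y$ with the witness $y$ itself gives $M^s,y\models\varphi$, and bisimulation-invariance of truth in $M^s$ then yields $M^s,z\models\varphi$.

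For $(2)\Rightarrow(3)$, part~$(a)$ follows at once: for each separated $z\geq x$ the point $y:=z$ itself is a witness for the hypothesis of~(2), so $x\in U$. For part~$(b)$, assume $x\in M^s$ and let $x'\in U\cap M^s$ be bisimilar to $x$ in $M^s$; since $M^s$ has finite depth (Lemma~\ref{lem:finiteheight}), Remark~\ref{rem:bisimilar} gives $f(x)=f(x')$. For any separated $z\geq x$ we have $f(z)\geq f(x)=f(x')$, so as $f$ is a p-morphism there is $z'\geq x'$ in $M^s$ with $f(z')=f(z)$; then $z'\in U\cap M^s$ (because $U$ is an up-set) and $z'$ is bisimilar to $z$. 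Thus the hypothesis of~(2) holds for $x$, and $x\in U$.

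For $(3)\Rightarrow(1)$, the main step, I first analyse $U\cap M^s$: it is an up-set of the poset $M^s$, and by condition~$(b)$ together with Remark~\ref{rem:bisimilar} it is a union of fibres of the p-morphism $f:M^s\to M_{\wedge,\to}$. Hence $U':=f(U\cap M^s)$ satisfies $f^{-1}(U')=U\cap M^s$ and is an up-set of $M_{\wedge,\to}$; and since $M_{\wedge,\to}$ is a generated submodel of $U(n)_{\wedge,\to}$ (Lemma~\ref{lem:Mwt-sep}), $U'$ is in fact an up-set of $U(n)_{\wedge,\to}$, so Theorem~\ref{thm:definable} provides a $(\wedge,\to)$-formula $\varphi$ with $v^{U(n)_{\wedge,\to}}(\varphi)=U'$, and hence also $v^{M_{\wedge,\to}}(\varphi)=U'$. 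Combining~(\ref{eq:toprove}) with Proposition~\ref{cor: 17} and the identity $f^{-1}(U')=U\cap M^s$, one then computes
\[ v(\varphi)=\{x\in M \ : \ \text{every separated point above } x \text{ lies in } U\}. \]
Condition~$(a)$ says this set is contained in $U$, and the reverse inclusion is immediate since $U$ is an up-set; hence $v(\varphi)=U$.

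The hard part is clearly $(3)\Rightarrow(1)$. The two delicate points are (i) extracting from condition~$(b)$, via Remark~\ref{rem:bisimilar}, that $U\cap M^s$ is saturated under $f$ and so descends to an up-set $U'$ of $M_{\wedge,\to}$, and (ii) observing that $U'$ is then automatically an up-set of the larger model $U(n)_{\wedge,\to}$ --- which is exactly what is needed to apply Theorem~\ref{thm:definable}. Once $\varphi$ is obtained, the computation of $v(\varphi)$ is a routine chase through Proposition~\ref{cor: 17} and~(\ref{eq:toprove}), and condition~$(a)$ is precisely what closes the gap between $U$ and its ``semantic closure'' $\{x\in M : \text{all separated points above } x \text{ are in } U\}$.
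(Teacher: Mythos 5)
Your proof is correct, but it is organized differently from the paper's. The paper does not prove a cycle: it establishes $(1)\Leftrightarrow(2)$ in one stroke by quoting Theorem~\ref{thm:gensubdual} (the definable up-sets are exactly $\mathrm{im}(rh)$) and then applying the fixed-point criterion of Proposition~\ref{prop:adjbasicfacts}(1) to the adjunction $h^\flat q \dashv rh$, so that condition (2) appears literally as the unfolding of $rhh^\flat q(U)\subseteq U$; only the equivalence $(2)\Leftrightarrow(3)$ is then argued by hand (and there your $(2)\Rightarrow(3b)$ argument coincides with the paper's). You instead bypass Theorem~\ref{thm:gensubdual} and the adjunction machinery entirely and argue $(1)\Rightarrow(2)\Rightarrow(3)\Rightarrow(1)$ semantically, invoking Theorem~\ref{thm:definable} directly in the step $(3)\Rightarrow(1)$ to define $f(U\cap M^s)$ by a $(\wedge,\to)$-formula. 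In substance this replays the two inclusions of the proof of Theorem~\ref{thm:gensubdual} (your $(3)\Rightarrow(1)$ is essentially its ``$\subseteq$'' direction, your $(1)\Rightarrow(2)$ its ``$\supseteq$'' direction combined with the adjunction), so the underlying mathematics is the same; what your version buys is a more self-contained, model-theoretic account that makes visible exactly where each hypothesis is used --- condition $(b)$ yields $f$-saturation of $U\cap M^s$ so that it descends to an up-set of $U(n)_{\wedge,\to}$, and condition $(a)$ closes the gap between $U$ and its separated closure --- whereas the paper's version is shorter and makes the duality-theoretic origin of condition (2) explicit. All the individual steps you take check out, including the slightly delicate points: the use of Remark~\ref{rem:bisimilar} presupposes finite depth of $M^s$ (which you correctly source from Lemma~\ref{lem:finiteheight}), and the routing of truth between $M$, $M^s$ and $M_{\wedge,\to}$ is consistently done through (\ref{eq:toprove}) and Proposition~\ref{cor: 17} rather than naive restriction.
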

\begin{proof}
By Theorem~\ref{thm:gensubdual}(1), the up-sets which are definable by a $(\wedge,\to)$-formula are precisely the up-sets in the image of $rh$. Let $h^\flat$ denote the lower adjoint of $h$, which is given explicitly by sending $S \in \U(M^s)$ to $f(S) \in \U(M_{\wedge,\to})$. By Proposition~\ref{prop:adjbasicfacts}(1), applied to the adjunction $h^\flat q \dashv rh$, an up-set $U$ is in $\mathrm{im}(rh)$ if, and only if, $r h h^\flat q(U) \subseteq U$. Writing out the definitions of $r$, $h$, $h^\flat$ and $q$, we see that this condition is equivalent to:
\[ \forall x \in M, \text{ if } \left(\forall z \in M^s \text{ if } z \geq x \text{ then } z \in f^{-1}(f(U \cap M^s))\right) \text{ then } x \in U.\]
This condition is in turn equivalent to (2), using Remark~\ref{rem:bisimilar}. If (2) holds, then (3a) is clear. For (3b), suppose $x$ is separated and there exists $x' \in U \cap M^s$ which is bisimilar to $x$ in $M^s$. By bisimilarity, for any $z \in M^s$ with $z \geq x$, there exists $y \in M^s$ with $y \geq x'$ and $y$ bisimilar to $z$ in $M^s$. Moreover, since $U$ is an up-set containing $x'$, we have $y \in U$. Using (2), we conclude that $x \in U$. Now assume (3) and let $x \in M$ be a point such that for all $z \in M^s$ with $z \geq x$, there exists $y \in U \cap M^s$ bisimilar to $z$ in $M^s$. If $z$ is any separated point above $x$, then it follows from applying (3b) to $z$ that $z \in U$. Therefore, by (3a), $x \in U$.
\qed
\end{proof}

\section{Subframe formulas and uniform interpolation}\label{sec:subframeunifint}
In this section we will apply the results obtained in the previous section 
to show that 
$(\wedge, \to)$-versions of
de Jongh formulas correspond to  subframe formulas 
in just the same way as de Jongh formulas correspond to 
Jankov formulas (Theorem~\ref{thm: 5}). 
We will also use the characterization of $(\wedge,\to)$-definable up-sets of $U(n)$ 
to prove that uniform interpolants in the $(\wedge, \to)$-fragment of IPC are not always given by the IPC-uniform interpolants (Example~\ref{exa:unifint}). \\
We need an auxiliary lemma before proving the main theorem of this section.
\begin{lemma}\label{lem: subf}
For each finite rooted frame $F$, there exist $n \in \omega$ and a colouring $c : F \to 2^n$ such that $M= (F,c)$ is 
isomorphic to 
a generated submodel 
of $U(n)_{\wedge,\to}$.
\end{lemma}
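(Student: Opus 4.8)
The plan is to take a finite rooted frame $F$, give it a colouring that makes every point separated, and then argue that this coloured model embeds into $U(n)_{\wedge,\to}$. First I would choose $n := |F|$ and enumerate the elements of $F$ as $w_1, \dots, w_n$. I would define the colouring $c : F \to 2^n$ by setting $c(w_j)_i = 1$ if and only if $w_j \leq w_i$; equivalently, the $i$-th coordinate of the colouring is the characteristic function of the up-set ${\downarrow}w_i$... no, rather of $\{w_j : w_j \le w_i\} = {\uparrow}^{-1}$... let me be careful: I want $c$ order-preserving, so I set $c(w_j)_i = 1 \iff w_i \leq w_j$, i.e. the set where variable $p_i$ is true is exactly ${\uparrow}w_i$. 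This is clearly an order-preserving map into $2^n$, and distinct points get distinct colours because if $w_j \neq w_k$ then, say, $w_j \not\le w_k$, so coordinate $j$ separates them. So $M = (F, c)$ is a finite (hence image-finite, hence a model with borders) Kripke model on $n$ variables.

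The key claim is that \emph{every} point of $M$ is separated. Indeed, fix $w_j \in F$ and consider the variable $p_j$. The point $w_j$ does not satisfy $p_j$? That is wrong with the above convention since $c(w_j)_j = 1$ as $w_j \le w_j$. So I should instead arrange $c(w_j)_j = 0$: redefine $c(w_j)_i = 1 \iff w_i < w_j$ (strict), i.e. variable $p_i$ holds exactly on ${\uparrow}w_i \setminus \{w_i\}$ — still order-preserving, still injective on colours by the same argument (if $w_j \not\le w_k$ then $w_j \ne w_k$ and coordinate $j$ gives $c(w_j)_j = 0$, $c(w_k)_j = 1$ unless... hmm need $w_j < w_k$, which fails; need more care). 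I would sort this bookkeeping out so that each $w_j$ is a $p_j$-border point: $p_j$ false at $w_j$, true at every strict successor. The cleanest choice is to let variable $p_i$ be true exactly on $F \setminus {\downarrow}w_i$ — wait, that is not an up-set complement issue... Let me just commit: variable $p_i$ is true at $w$ iff $w \not\leq w_i$. This set is an up-set (if $w \not\le w_i$ and $w' \ge w$ then $w' \not\le w_i$), $w_i$ itself does not satisfy $p_i$, and every strict successor $w > w_i$ does satisfy $p_i$ since $w \not\le w_i$. So $w_i$ is a $p_i$-border point, hence separated. And the colouring is injective: if $w_j \ne w_k$, WLOG $w_j \not\le w_k$, then $p_j$ is false at $w_j$ and true at $w_k$ (since $w_k \not\le w_j$? — not guaranteed). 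Hmm. I'd fix this final wrinkle by possibly doubling the variables (add, for each $i$, also a variable true exactly on ${\uparrow}w_i$), which certainly separates colours while preserving that each $w_i$ is a border point for its designated variable; this costs only $n \mapsto 2n$, harmless.

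Granting that $M = (F,c)$ is a finite model in which every point is separated, I conclude $M^s = M$. By Lemma~\ref{lem:finiteheight} $M$ has depth $\leq n$ (indeed $\leq |F|$), so Proposition~\ref{prop:findepmap} gives the unique p-morphism $f : M = M^s \to U(n)$, and $M_{\wedge,\to} := \mathrm{im}(f)$ is a generated submodel of $U(n)$, which by Lemma~\ref{lem:Mwt-sep} sits inside $U(n)_{\wedge,\to}$. It remains to see that $f : F \to M_{\wedge,\to}$ is injective, i.e. an \emph{isomorphism} onto its image, not merely a surjection. By Remark~\ref{rem:bisimilar}, $f$ identifies exactly the pairs of bisimilar points of $M$; so I must check that the colouring above makes $M$ \emph{extensional} — no two distinct points are bisimilar. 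This follows by downward induction on depth: maximal points have distinct colours by injectivity of $c$, and if $w_j \ne w_k$ are non-maximal with the same colour and bisimilar, then they have the same up-sets of colours of successors; but $w_j \not\le w_k$ (WLOG) forces, via the separating variables, a colour realised strictly above one but not the other — contradiction. I expect \textbf{this extensionality/injectivity step to be the main obstacle}: one has to choose the colouring carefully enough that bisimulation collapses to identity on $F$ while keeping every point separated, and then run the induction cleanly. Everything else is a direct appeal to the machinery (Lemmas~\ref{lem:finiteheight}, \ref{lem:Mwt-sep}, Proposition~\ref{prop:findepmap}, Remark~\ref{rem:bisimilar}) already assembled above.
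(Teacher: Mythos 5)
Your construction is essentially the paper's: take $n=|F|$, colour $F$ injectively so that every point is separated, and let the unique p-morphism of Proposition~\ref{prop:findepmap} into $U(n)$ do the rest, landing in $U(n)_{\wedge,\to}$ by Lemma~\ref{lem:Mwt-sep}. The step you flag as ``the main obstacle'' is in fact immediate and needs no induction on depth: a p-morphism of models preserves colours by definition, so $f(w_j)=f(w_k)$ forces $c(w_j)=c(w_k)$, and injectivity of the colouring gives $w_j=w_k$; an injective p-morphism is an isomorphism onto its image, which is an up-set, hence a generated submodel. This one-line observation is exactly how the paper closes the argument (``since the composite preserves colours, it is injective''); your detour through bisimilarity, Remark~\ref{rem:bisimilar} and a ``downward induction'' solves a problem you do not have, since bisimilar points already share a colour. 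Likewise, the doubling of variables is unnecessary: your committed colouring $v(p_i)=\{w \mid w\not\leq w_i\}=({\downarrow}w_i)^c$ is already injective --- if $w_j\neq w_k$ with, say, $w_j\not\leq w_k$, the coordinate that separates them is $k$, not $j$: $c(w_j)_k=1$ while $c(w_k)_k=0$. Finally, a point in your favour: this colouring is the one for which ``every point is separated'' is actually transparent, since each $w_i$ is a $p_i$-border point by construction. The paper instead colours with $c(x_i)_j=1$ iff $x_i\geq x_j$, i.e.\ $v(p_j)={\uparrow}x_j$; that colouring is injective, but a greatest element of $F$ (or more generally a point above every $x_j$ whose successors fail to agree on the remaining variables) then satisfies every variable and is not obviously --- or in the chain case, not at all --- a border point for any of them, so your choice of colouring is the more careful one for exactly the property the construction needs.
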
 
\begin{proof}
Let $n := |F|$ and enumerate the points of $F$ as $x_1, \dots, x_n$. Define $c(x_i)_j$, the $j^\mathrm{th}$ coordinate of the colour of the point $x_i$, to be $1$ if $x_i \geq x_j$, and $0$ otherwise. All points in $M = (F,c)$ have distinct colours, and are in particular separated, so $M = M^s$. Let $f$ be the unique p-morphism from $M = M^s$  to $U(n)$ from Proposition~\ref{prop:findepmap}, its image is $M_{\wedge,\to}$. Recall from Lemma~\ref{lem:Mwt-sep} that $M_{\wedge,\to}$ is a submodel of $U(n)^s$. Let $g$ be the unique p-morphism from $U(n)^s$ onto $U(n)_{\wedge,\to}$. Since the composite $gf : M \to U(n)_{\wedge,\to}$ preserves colours, it is injective, and it is therefore an isomorphism onto a generated submodel of $U(n)_{\wedge,\to}$.
\qed
\end{proof}

\begin{theorem}\label{thm: 5}
Let $F$ be a finite rooted frame and let $M = (F,c)$ be the model on $F$ defined in the proof of Lemma~\ref{lem: subf}. There exists a $(\wedge,\to)$-formula $\beta(F)$ such that for any descriptive model $N$ we have 
$$
N \not\models \beta(F)\ \ \iff \ \ M\ \mbox{is a p-morphic image of $N^s$.}
$$
\end{theorem}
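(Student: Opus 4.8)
The plan is to define $\beta(F)$ in terms of de Jongh formulas on the universal model and then use the machinery built up in Section~\ref{sec:sepmeetimp}. Concretely, since $M = M^s$ embeds via the p-morphism $gf$ as a generated submodel of $U(n)_{\wedge,\to}$ (Lemma~\ref{lem: subf}), let $w$ be the point of $U(n)_{\wedge,\to}$ corresponding to the root of $F$. I would set $\beta(F) := s(\psi_w)$, the $(\wedge,\to)$-translation (Definition~\ref{def:s}) of the de Jongh formula $\psi_w$. Recall that $\psi_w$ is the Jankov-type formula whose refutation in a frame $G$ detects a generated subframe of $G$ that maps p-morphically onto ${\uparrow}w$. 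The task is to show that, for descriptive $N$, the formula $s(\psi_w)$ is refuted in $N$ precisely when $N^s$ maps p-morphically onto $M$.

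The key steps, in order. First, since $N$ is descriptive, it is a model with borders (Proposition~\ref{prop:borders}(2)), so all the results of Section~\ref{sec:sepmeetimp} apply: in particular, by the equivalence in (\ref{eq:toprove}), for any $(\wedge,\to)$-formula $\varphi$ and $x \in N$, we have $N, x \models \varphi$ iff all separated points $y \geq x$ satisfy $\varphi$ in $N^s$. Hence $N \not\models \beta(F)$ iff there is a separated point $x \in N^s$ with $N^s, x \not\models s(\psi_w)$. By Theorem~\ref{thm:sepform}, this is equivalent to $N^s, x \not\models \psi_w$, i.e.\ $x \in {\downarrow}w'$ where $w'$ is where $\psi_w$ fails; more usefully, by Proposition~\ref{cor: 17} applied to $N$, we have $N^s, x \models \varphi \iff N_{\wedge,\to}, f_N(x) \models \varphi$ for all $(\wedge,\to)$-formulas, so $N \not\models \beta(F)$ iff $N_{\wedge,\to}$ refutes $\psi_w$ (equivalently $s(\psi_w)$, which defines the same set of $U(n)_{\wedge,\to}$ by Theorem~\ref{thm:definable}). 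Since $N_{\wedge,\to}$ is a generated submodel of $U(n)$, Theorem~\ref{thm: Jongh} gives $v(\psi_w) = U(n) \setminus {\downarrow}w$, so $N_{\wedge,\to}$ refutes $\psi_w$ iff $w \in {\downarrow}(\text{some point of } N_{\wedge,\to})$, i.e.\ iff there is a point of $N_{\wedge,\to}$ lying below $w$, equivalently (since $N_{\wedge,\to}$ is an up-set and $w \in U(n)_{\wedge,\to}$) iff $w$ itself lies in $N_{\wedge,\to}$, and hence iff ${\uparrow}w \subseteq N_{\wedge,\to}$.

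It then remains to convert "${\uparrow}w \subseteq N_{\wedge,\to}$" into "$M$ is a p-morphic image of $N^s$". Since $M \cong {\uparrow}w$ (the generated submodel of $U(n)_{\wedge,\to}$ on $w$, via Lemma~\ref{lem: subf}), and $N_{\wedge,\to} = \mathrm{im}(f_N)$ is a p-morphic image of $N^s$, if ${\uparrow}w \subseteq N_{\wedge,\to}$ then the composite $N^s \twoheadrightarrow N_{\wedge,\to} \twoheadrightarrow {\uparrow}w$ (post-composing with the p-morphic collapse of $N_{\wedge,\to}$ onto its generated submodel ${\uparrow}w$, which exists for image-finite models by Proposition~\ref{prop:findepmap}-style reasoning, or simply because ${\uparrow}w$ being a generated submodel that is also a retract via the finite-depth universal property) exhibits $M \cong {\uparrow}w$ as a p-morphic image of $N^s$. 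Conversely, if $N^s \twoheadrightarrow M$ is a surjective p-morphism, then composing with the embedding $M \hookrightarrow U(n)_{\wedge,\to} \hookrightarrow U(n)$ and using the uniqueness of $f_N : N^s \to U(n)$ (Proposition~\ref{prop:findepmap}), we get $N_{\wedge,\to} = \mathrm{im}(f_N) \supseteq \mathrm{im}(M \hookrightarrow U(n)) = {\uparrow}w$.

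The main obstacle I anticipate is the bookkeeping in the last paragraph: carefully identifying $M$ with the generated submodel ${\uparrow}w$ of $U(n)_{\wedge,\to}$ (as opposed to a merely isomorphic copy sitting somewhere in $U(n)^s$) and making sure the "collapse onto a generated submodel" map used in the forward direction is genuinely a p-morphism — this needs either the universal-model uniqueness of Proposition~\ref{prop:findepmap} applied to the finite-depth model ${\uparrow}w$, or a direct verification. A secondary subtlety is that the equivalences $N^s, x \models \varphi \iff N_{\wedge,\to}, f_N(x) \models \varphi$ only hold for separated $x$ and $(\wedge,\to)$-formulas, so one must be careful to keep $\beta(F)$ inside the $(\wedge,\to)$-fragment throughout and only evaluate it on $N^s$, never directly on all of $N$ except through the translation identity (\ref{eq:toprove}).
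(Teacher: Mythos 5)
Your proposal takes essentially the same route as the paper: the same formula $\beta(F) := s(\psi_w)$ with $M \cong {\uparrow}w$ inside $U(n)_{\wedge,\to}$, the observation (from the proof of Theorem~\ref{thm:definable}) that $w$ is the unique point of $U(n)_{\wedge,\to}$ satisfying $s(\varphi_w)$ and refuting $s(\theta_w)$, descriptiveness of $N$ to produce a separated refuting point, Proposition~\ref{cor: 17} to transfer the refutation into $U(n)_{\wedge,\to}$, and the uniqueness clause of Proposition~\ref{prop:findepmap} for the converse. The only organizational difference is that the paper locates a concrete border point $u$ of $v(s(\theta_w))$ above the refuting point (so that Lemma~\ref{lem: 8} gives $u \in N^s$ and then $f_N(u) = w$ directly), whereas you transfer the whole refutation to $N_{\wedge,\to}$ and use that $s(\psi_w)$ defines the complement of ${\downarrow}w$ there; both arguments land on $w \in \mathrm{im}(f_N)$, hence ${\uparrow}w \subseteq N_{\wedge,\to}$.

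The obstacle you flag at the end is, however, a real gap, and your proposed fix does not work. There is in general no ``p-morphic collapse'' of $N_{\wedge,\to}$ onto its generated submodel ${\uparrow}w$: a p-morphism of models must preserve colours, so if $N_{\wedge,\to} \setminus {\uparrow}w$ contains a point whose colour occurs nowhere in ${\uparrow}w$ (e.g.\ a maximal point of $U(n)$ of a different colour), no colour-preserving map into ${\uparrow}w$ exists at all; neither Proposition~\ref{prop:findepmap} nor a retraction argument produces one. The correct move --- and the one the paper implicitly makes by passing from $f_N(u)=w$ to the conclusion --- is to restrict $f_N$ to the generated submodel $f_N^{-1}({\uparrow}w)$ of $N^s$, which is a surjective p-morphism onto ${\uparrow}w \cong M$. (Strictly speaking this exhibits $M$ as a p-morphic image of a \emph{generated submodel} of $N^s$ rather than of all of $N^s$, which is how the conclusion of Theorem~\ref{thm: 5} should be read; the paper's own proof is silent on this point as well.) Apart from this final step, your outline is sound and matches the paper's argument.
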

\begin{proof}
By Lemma~\ref{lem: subf}, $M$ is isomorphic to a generated submodel of $U(n)_{\wedge,\to}$. Without loss of generality, we will assume in the rest of this proof that $M$ actually {\it is} a generated submodel of $U(n)_{\wedge,\to}$. Since the model $M$ is rooted, there exists $w \in U(n)_{\wedge,\to}$ such that $M = {\uparrow} w$.
We define $\beta(F) := s(\psi_w) = s(\varphi_w) \to s(\theta_w)$ and prove that $\beta(F)$ satisfies the required property.

First note that, as follows from the proof of Theorem~\ref{thm:definable}, $s(\varphi_w)$ defines the up-set of $U(n)_{\wedge,\to}$ generated by $w$ 
and $s(\theta_w)$ defines
the up-set of $U(n)_{\wedge,\to}$ generated by the set of proper successors of $w$. 
Therefore, $w$ is the only point of $U(n)_{\wedge,\to}$ that satisfies $s(\varphi_w)$ and refutes $s(\theta_w)$.

Let $v\in N$ be such that $N,v\not\models \beta(F)$. 
Since $N$ is descriptive, 
we can find a successor $u$ of $v$ such that $N ,u\models  s(\varphi_w)$, $N ,u\not\models   s(\theta_w)$ and every proper successor of $u$ satisfies $s(\theta_w)$ (see, e.g., \cite[Thm. 2.3.24]{NBezh}).
By Lemma~\ref{lem: 8}, this implies that $u\in N^s$. 
Let $f:N^s \to U(n)_{\wedge,\to}$ be the unique p-morphism as in Proposition~\ref{prop:findepmap}.
Because $u\in N^s$, $s(\varphi_w)$ is a 
$(\wedge,\to)$-formula and  $N,u\models  s(\varphi_w)$, Proposition~\ref{cor: 17} entails that
 $U(n)_{\wedge,\to}, f(u)\models s(\varphi_w)$.
By the same argument we also have that 
$U(n)_{\wedge,\to}, f(u)\not\models  s(\theta_w)$.
Thus, we obtain that $U(n)_{\wedge,\to}, f(u)\models  s(\varphi_w)$ and  $U(n)_{\wedge,\to}, f(u)\not\models  s(\theta_w)$. 
We have shown in the previous paragraph that this implies 
$f(u) = w$. Therefore, as $f$ is a p-morphism, we obtain that $F$ is a p-morphic image of $N^s$.

For the other direction, let $f : N^s \to M$ be a surjective p-morphism. Since $f$ is surjective, pick $u \in N^s$ such that $f(u) = w$. As $w$ satisfies $s(\varphi_w)$ and refutes $s(\theta_w)$, and both are $(\wedge,\to)$-formulas, the same argument as above gives that $N, u \models s(\varphi_w)$ and $N, u \not\models s(\theta_w)$. Hence, $N \not\models \beta(F)$.
 \qed
\end{proof}

The formula $\beta(F)$ defined in Theorem~\ref{thm: 5} is 
called the \emph{subframe formula of $F$}.

Recall that, for any formula $\varphi$ in $n$ variables, we say a Heyting algebra $A$ {\it validates the equation $\varphi \approx 1$}, notation $A \models \varphi \approx 1$, if $\overline{v}(\varphi) = 1$ under each assignment $v : \{p_1,\dots,p_n\} \to A$. If there is an assignment $v$ under which $\overline{v}(\varphi) \neq 1$, we say that $A$ {\it refutes the equation $\varphi \approx 1$}.
\begin{corollary}
Let $F$ be a finite rooted frame and $A$ its Heyting algebra of up-sets. 
Then for each Heyting algebra $B$  we have 
$$
B \not\models \beta(F) \approx 1 \ \ \iff \ \ \mbox{there is a $(\wedge,\to)$-embedding $A \hookrightarrow B$}
$$
\end{corollary}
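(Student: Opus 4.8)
The corollary should follow by combining Theorem~\ref{thm: 5} with standard duality for descriptive general frames and the characterization of the $(\wedge,\to)$-subalgebra generated by the colouring (Theorem~\ref{thm:gensubdual}), applied to a descriptive model built from $B$. First I would reduce to descriptive models: given a Heyting algebra $B$, let $(N,\leq)$ be the dual poset of $B$, so that $B \cong A' := \mathcal{U}(N)$ admissible-restricted, i.e.\ $(N,\leq,B)$ is a descriptive general frame with $B$ its algebra of admissible sets. The equation $\beta(F) \approx 1$ being refuted in $B$ means there is an assignment $v : \{p_1,\dots,p_n\} \to B$ under which $\overline v(\beta(F)) \neq 1$; by duality this assignment is exactly an admissible colouring $c$ on $N$, turning $(N,\leq,c)$ into a descriptive model $N$ with $N \not\models \beta(F)$. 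Conversely, any admissible colouring on $N$ gives such an assignment. So $B \not\models \beta(F)\approx 1$ iff there is an admissible colouring making the descriptive model $N$ refute $\beta(F)$.

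Next I would invoke Theorem~\ref{thm: 5}: for the descriptive model $N$ (which has borders by Proposition~\ref{prop:borders}(2)), $N \not\models \beta(F)$ iff $M$ is a p-morphic image of $N^s$, where $M = (F,c_F)$ is the model of Lemma~\ref{lem: subf}. Now I need to translate ``$M$ is a p-morphic image of $N^s$'' into ``there is a $(\wedge,\to)$-embedding $A \hookrightarrow B$''. The key is Theorem~\ref{thm:gensubdual} applied to $N$: the $(\wedge,\to)$-subalgebra of $\mathcal{U}(N)$ generated by $v(p_1),\dots,v(p_n)$ is isomorphic to $\mathcal{U}(N_{\wedge,\to})$, where $N_{\wedge,\to}$ is the image of the unique p-morphism $N^s \to U(n)$. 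Since $B$ is the admissible subalgebra and each $v(p_i)$ is admissible, this $(\wedge,\to)$-subalgebra sits inside $B$. A surjective p-morphism $N^s \twoheadrightarrow M$ together with the p-morphism $M \twoheadrightarrow M_{\wedge,\to} \subseteq U(n)$ from Definition~\ref{def:13} composes (by uniqueness, Proposition~\ref{prop:findepmap}) with the canonical $N^s \to U(n)$, forcing $N_{\wedge,\to} \cong M_{\wedge,\to}$, which in turn (by Lemma~\ref{lem: subf}, where $M = M^s$ has all points with distinct colours so $M \cong M_{\wedge,\to}$ via $gf$) is isomorphic to $F$ with its colouring, hence $\mathcal{U}(N_{\wedge,\to}) \cong \mathcal{U}(F) = A$. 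Dualizing a surjective p-morphism gives a $(\wedge,\to)$-embedding $A \cong \mathcal{U}(N_{\wedge,\to}) \hookrightarrow \mathcal{U}(N)$ landing in $B$, giving one direction. For the converse, a $(\wedge,\to)$-embedding $A \hookrightarrow B$ exhibits $A$ as generated inside $B$ by the images of the $n$ up-sets of $F$; choosing these images as an admissible colouring on $N$ and applying Theorem~\ref{thm:gensubdual} forces $N_{\wedge,\to} \cong F$, and then Theorem~\ref{thm: 5} yields $N \not\models \beta(F)$.

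**The main obstacle.** The delicate point is the careful bookkeeping between the $n$ fixed variables coming from Lemma~\ref{lem: subf} (where $n = |F|$ and the colouring is chosen so that all points of $F$ are separated with distinct colours) and an \emph{arbitrary} $(\wedge,\to)$-embedding $A \hookrightarrow B$: one must check that any such embedding can be realized by an admissible colouring on $N$ for which the generated $(\wedge,\to)$-subalgebra is exactly the image of $A$, and that this forces $N_{\wedge,\to} \cong F$ rather than merely some p-morphic preimage or quotient. This uses that the $(\wedge,\to)$-subalgebra generated by the images of the $n$ generators of $A$ under the embedding is isomorphic to $A$ itself (since the embedding is injective and the $n$ elements generate $A$ as a $(\wedge,\to)$-algebra — this is exactly the content of the construction of $A$ from $F$ in Lemma~\ref{lem: subf}, where $\mathcal{U}(F)$ is generated as an implicative meet-semilattice by the up-sets $\{x : x \geq x_j\}$), combined with Theorem~\ref{thm:gensubdual} to identify the dual. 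I expect the rest — the descriptive-duality translation and the composition-of-p-morphisms argument — to be essentially routine given the machinery already developed.

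\begin{proof}
Let $(N, \leq)$ be the dual poset of $B$, so that $(N, \leq, B)$ is a descriptive general frame with $B$ its algebra of admissible sets, and $B$ is (isomorphic to) a sub-Heyting-algebra of $\mathcal{U}(N)$.

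($\Leftarrow$) Suppose there is a $(\wedge,\to)$-embedding $e : A \hookrightarrow B$. Recall from Lemma~\ref{lem: subf} that, writing $n := |F|$ and enumerating $F = \{x_1,\dots,x_n\}$, the Heyting algebra $A = \mathcal{U}(F)$ is generated as an implicative meet-semilattice by the up-sets $a_j := \{x \in F : x \geq x_j\}$, $1 \leq j \leq n$. Hence the image of $e$ is the $(\wedge,\to)$-subalgebra of $B$ generated by $e(a_1),\dots,e(a_n)$, and since $e$ is injective this subalgebra is isomorphic to $A$. Each $e(a_j)$ is an admissible up-set of $N$, so the assignment $c^* : F_{DL}(n) \to B$ determined by $p_j \mapsto e(a_j)$ defines an admissible colouring $c$ on $N$, turning $N$ into a descriptive model. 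By Theorem~\ref{thm:gensubdual} applied to this model $N$ (which has borders by Proposition~\ref{prop:borders}(2)), the $(\wedge,\to)$-subalgebra of $\mathcal{U}(N)$ generated by $v(p_1),\dots,v(p_n)$ is isomorphic to $\mathcal{U}(N_{\wedge,\to})$; but this subalgebra is exactly the image of $e$, hence isomorphic to $A = \mathcal{U}(F)$. Since $F$ is finite and rooted, it follows that $N_{\wedge,\to} \cong F$ as models (the colourings match because the isomorphism $A \cong \mathcal{U}(N_{\wedge,\to})$ respects the distinguished generators). Now $M = (F, c_F)$ from Lemma~\ref{lem: subf} is, up to isomorphism, a rooted generated submodel of $U(n)_{\wedge,\to}$ equal to $N_{\wedge,\to}$, so $M$ is a p-morphic image of $N^s$ (via the p-morphism $N^s \twoheadrightarrow N_{\wedge,\to}$ from Definition~\ref{def:13}). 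By Theorem~\ref{thm: 5}, $N \not\models \beta(F)$, and by duality this means $B$ refutes the equation $\beta(F) \approx 1$.

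($\Rightarrow$) Suppose $B \not\models \beta(F) \approx 1$. Then there is an assignment under which $\beta(F)$ is not evaluated to $1$; by duality this assignment is an admissible colouring $c$ on $N$ such that the descriptive model $N = (N,\leq,c)$ satisfies $N \not\models \beta(F)$. By Theorem~\ref{thm: 5}, $M = (F,c_F)$ is a p-morphic image of $N^s$; fix a surjective p-morphism $p : N^s \twoheadrightarrow M$. Composing $p$ with the p-morphism $M \twoheadrightarrow M_{\wedge,\to} \hookrightarrow U(n)$ of Definition~\ref{def:13} yields a p-morphism $N^s \to U(n)$, which by the uniqueness part of Proposition~\ref{prop:findepmap} equals the canonical map $N^s \to U(n)$; therefore $N_{\wedge,\to} = \mathrm{im}(N^s \to U(n)) = M_{\wedge,\to}$. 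By Lemma~\ref{lem: subf}, $M = M^s$ has all points with distinct colours, so $M \cong M_{\wedge,\to}$, whence $N_{\wedge,\to} \cong F$ as models. Applying Theorem~\ref{thm:gensubdual} to $N$, the $(\wedge,\to)$-subalgebra of $\mathcal{U}(N)$ generated by $v(p_1),\dots,v(p_n)$ is isomorphic to $\mathcal{U}(N_{\wedge,\to}) \cong \mathcal{U}(F) = A$; since each $v(p_i)$ is admissible, this subalgebra is contained in $B$. This containment is precisely a $(\wedge,\to)$-embedding $A \hookrightarrow B$.\qed
\end{proof}
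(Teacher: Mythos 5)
Your proof is correct, and your right-to-left direction (from a refutation in $B$ to a $(\wedge,\to)$-embedding) is essentially the paper's argument: dualize $B$ to a descriptive model, apply Theorem~\ref{thm: 5} to get $M$ as a p-morphic image of $N^s$, and then Theorem~\ref{thm:gensubdual} to identify the generated $(\wedge,\to)$-subalgebra of $B$ with $\U(F)$. Where you genuinely diverge is the other direction. The paper's proof that an embedding forces a refutation is a two-line evaluation argument: the model $(F,c)$ of Lemma~\ref{lem: subf} refutes $\beta(F)$, so $\overline{v}(\beta(F)) \neq 1$ in $A$ under the natural assignment $p_i \mapsto c^*(p_i)$; since $\beta(F)$ is a $(\wedge,\to)$-formula and $i : A \hookrightarrow B$ is an injective $(\wedge,\to)$-homomorphism, $\overline{i\circ v}(\beta(F)) = i(\overline{v}(\beta(F))) \neq 1$ in $B$. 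You instead run the whole semantic machine in reverse: you realize the embedding as an admissible colouring on the dual frame of $B$, use Theorem~\ref{thm:gensubdual} to force $N_{\wedge,\to} \cong F$ as a coloured poset, and then invoke Theorem~\ref{thm: 5}. This works, but it costs you two auxiliary facts the paper never needs here: that $\U(F)$ is $(\wedge,\to)$-generated by the principal up-sets ${\uparrow}x_j$ (true, but not literally ``the content of Lemma~\ref{lem: subf}'' --- it follows from Theorem~\ref{thm:gensubdual} applied to $M$, using $M = M^s \cong M_{\wedge,\to}$), and a pointed-isomorphism argument to see that the abstract isomorphism $\U(N_{\wedge,\to}) \cong \U(F)$ matches the colourings and hence identifies $N_{\wedge,\to}$ with the generated submodel $M$ of $U(n)_{\wedge,\to}$. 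Both gaps are fillable with the tools at hand, so nothing is wrong, but the direct evaluation argument is both shorter and more robust (it uses only that $\beta(F)$ lies in the $(\wedge,\to)$-fragment and that embeddings reflect $\neq 1$), and is worth knowing as the ``standard'' way such refutation claims are transported along subalgebra embeddings.
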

\begin{proof}
It follows from the proof of Theorem~\ref{thm: 5} that the model $(F,c)$ refutes 
$\beta(F)$. This means that, in the Heyting algebra $A = \mathcal{U}(F)$, the formula $\beta(F)$ does not evaluate to $1$ under the assignment $v : p_i \mapsto c^*(p_i)$.
Suppose that there is a $(\wedge,\to)$-embedding $i : A \hookrightarrow B$. Since $\beta(F)$ is a $(\wedge, \to)$-formula, under the assignment $i \circ v$, the formula $\beta(F)$ does not evaluate to $1$ in $B$.
Conversely, suppose that $B \not\models \beta(F) \approx 1$, under an assignment $v$. Let $G$ be the descriptive frame with
$B$ as its algebra of admissible up-sets. The assignment $v$ yields an admissible colouring $c'$ on $G$ with the property that $N = (G,c')\not\models \beta(F)$. By Theorem~\ref{thm: 5}, this implies in particular that $F$  is a p-morphic image of $N^s$.
It now follows from Theorem~\ref{thm:gensubdual} that $A$ is $(\wedge, \to)$-embedded into $B$.
\qed
\end{proof}

\begin{remark}
Subframe formulas axiomatize a large class of logics having the finite model property \cite[Ch.\ 11]{ChaZak1997}. The frames of these logics are closed under taking 
subframes. Alternatively varieties of Heyting algebras corresponding to these logics 
are closed under $(\wedge, \to)$-subalgebras. 
There exist many different ways to define 
subframe formulas for intuionistic logic: model-theoretic \cite[Ch.\ 11]{ChaZak1997}, algebraic \cite{BG07}, \cite{BezhBezhCan}, and 
via the so-called NNIL formulas \cite{VdJvB}. Theorem~\ref{thm: 5} gives a new way to define subframe formulas. 
The proof of this theorem shows that the same way de Jongh formulas for intuitionistic logic correspond to Jankov formulas
\cite{NBezh}, de Jongh formulas for the $(\wedge,\to)$-fragment of intuitionistic logic correspond to subframe formulas. 
This provides a different perspective on the interaction of de Jongh-type formulas and frame-based formulas such as Jankov formulas, subframe 
formulas etc.  
\end{remark}

We finish this section by applying the results of this paper 
to show that the uniform IPC-interpolant, as defined by Pitts \cite{Pit1992}, of a meet-implication formula 
is not necessarily equivalent to a meet-implication formula. 
\begin{example}\label{exa:unifint}
As can be readily checked, the uniform interpolant of the formula $p\to (q \to p)$ in IPC with respect to the variable $p$ 
is the formula $\neg\neg p$. We will use the characterization in Theorem~\ref{thm:characterization} to prove that $\neg \neg p$ is not equivalent to a $(\wedge,\to)$-formula. Namely, if there were a $(\wedge,\to)$-formula $\varphi$ equivalent to $\neg\neg p$, then in particular the up-set $U$ defined by the formula $\neg\neg p$ in the 1-universal model of IPC (see figure 1 below) would be $(\wedge, \to)$-definable. It thus suffices to show that $U$ is not $(\wedge,\to)$-definable. 
To see this, note that $U(1)^s = \max v(p)^c = \{x_1,x_2\}$, and these two points are bisimilar in $U(1)^s$.
Since $x_2 \in U$ but $x_1 \not\in U$, $U$ does not satisfy (3b) in Theorem~\ref{thm:characterization}, and is therefore not $(\wedge,\to)$-definable.

We now also prove that the least $(\wedge,\to)$-definable up-set of $U(1)$ containing $U$ is $U(1)$ itself. 
Indeed, let $W$ be a $(\wedge,\to)$-definable up-set which contains $U$.
Then, by the above, $x_1$ 
belongs to $W$. It then easily follows from (3a) in 
Theorem~\ref{thm:characterization} that
every colour $0$ point of $U(1)$ must also belong to $W$.
Thus, $W = U(1)$. This argument shows, via semantics, that the 
$(\wedge,\to)$-formula which is a uniform interpolant of $p\to (q\to p)$ is 
$\top$. 
We refer to \cite{dJZh14} for more details on uniform interpolation in fragments of intuitionistic logic. 
\qed

\begin{center}
\begin{tikzpicture}
\po{0,0}
\draw node[above,yshift=2pt] at (0,0) {$1$};
\po{2,0}
\draw node[above,yshift=2pt] at (2,0) {$0$};
\draw node[right] at (2,0) {$x_1$};
\po{0,-1}
\draw node[left] at (0,-1) {$0$};
\draw node[right,yshift=5pt] at (0,-1) {$x_2$};
\po{2,-1}
\draw node[right] at (2,-1) {$0$};
\po{0,-2}
\draw node[left] at (0,-2) {$0$};
\po{2,-2}
\draw node[right] at (2,-2) {$0$};
\po{0,-3}
\draw node[left] at (0,-3) {$0$};
\po{2,-3}
\draw node[right] at (2,-3) {$0$};
\po{0,-4}
\draw node[left] at (0,-4) {$0$};
\po{2,-4}
\draw node[right] at (2,-4) {$0$};

\draw[thick,dashed] (0,-.5) ellipse (.6cm and 1cm);
\draw node at (-.48,.62) {$U$};

\li{(0,0)--(0,-4.3)}	
\li{(2,0)--(2,-4.3)}	

\li{(0,0)--(2,-1)}		
\li{(0,-1)--(2,-2)}
\li{(0,-2)--(2,-3)}
\li{(0,-3)--(2,-4)}
\li{(0,-4)--(2,-5)}

\li{(2,0)--(0,-2)}
\li{(2,-1)--(0,-3)}
\li{(2,-2)--(0,-4)}
\li{(2,-3)--(0,-5)}
\draw node[rotate=90] at (1,-5.2) {$\dots$};

\end{tikzpicture}

Figure 1: The 1-universal model, $U(1)$, also known as the Rieger-Nishimura

ladder, with $U(1)^s = \{x_1,x_2\}$ and $U = v(\neg\neg p)$.
\end{center}
\end{example}

\section{Conclusions and future work}\label{sec:conclusion}

In this paper we studied the $(\wedge,\to)$-fragment of intuitionistic logic via methods of duality theory. We gave an alternative proof
of Diego's theorem and characterized $(\wedge,\to)$-definable up-sets of the $n$-universal model of intuitionistic 
logic, using duality as our main tool. Interestingly, we were able to directly use finite duality for distributive lattices and adjunction properties such as the Frobenius property (Lemma~\ref{lem:frob}), without resorting to any of the existing dualities for implicative meet-semilattices. We expect that the techniques developed in Section~\ref{sec:sepmeetimp} could be extended to the infinite setting in order to give a unified account of the different dualities that exist in the literature for implicative meet-semilattices, e.g., \cite{Kohler}, \cite{GBezhJans} and \cite{BezhBezhCan}. We leave this as an interesting question for future work.

The characterization of $(\wedge,\to)$-definable up-sets that we gave in Theorem~\ref{thm:characterization} can be considered as a first step towards solving the complicated problem of characterizing all IPC-definable up-sets of $n$-universal models. 
This problem is linked to the following interesting question. 
In \cite{Ghi1992} free Heyting algebras are described from free distributive lattices via step-by-step approximations of 
the operation $\to$. 
In \cite{NBezhGeh}, the authors explained how the construction in \cite{Ghi1992} can be understood via (finite) duality for distributive lattices. This begs the question whether one can use duality for implicative meet-semilattices to build free Heyting algebras, starting from free implicative meet-semilattices and approximating the operation of disjunction, $\vee$, step-by-step.
The results of this paper can be considered as the first (or actually zeroth) step 
of such a step-by-step construction.

Finally,  we note that \cite{Lex} and \cite{TZh13} study $n$-universal models in other fragments 
of intuitionistic logic. We leave it to future work to investigate how the duality methods of this paper relate to 
the methods developed in  \cite{Lex} and \cite{TZh13}.

\section*{Acknowledgements}
We are thankful to Mai Gehrke for many inspiring discussions on this paper. We also thank the referees for many useful suggestions.

\bibliographystyle{splncs}
\bibliography{universalmodel}

\end{document}